\newtheorem{theorem}{Theorem}[section]
\newtheorem{proposition}{Proposition}[section]
\newtheorem{definition}{Definition}
\newtheorem{lemma}{Lemma}[section]
\newtheorem{corollary}{Corollary}
\newtheorem{remark}{Remark}
\numberwithin{equation}{section}
\def\Aut{{\mathop{\rm Aut}}}
\newcommand\Z{{\mathbb{Z}}}
\newcommand\R{{\mathbb{R}}}
\title{Crossing-changeable braids from  chromatic configuration spaces}
\author{Hao Li and Zhi L\"u}
\thanks{Partially supported by the  NSFC  grants (No. 11971112 and 11431009).}
\keywords{Crossing-changable braid, chromatic configuration space, extended fundamental group.}
\address{School of Mathematical Sciences, Fudan University, Shanghai, 200433, P. R. China. }
\email{14110840001@fudan.edu.cn}
\address{School of Mathematical Sciences, Fudan University, Shanghai, 200433, P. R. China. }
 \email{zlu@fudan.edu.cn}
\begin{document}

\maketitle
\begin{abstract}
Motivated by the work in \cite{LLL}, this paper deals with the theory of the braids from chromatic configuration spaces. This kind of braids possess the property that  some strings of each  braid may intersect together and can also be untangled, so they are quite different from the ordinary braids in the sense of Artin.  This enriches and
 extends the theory of ordinary braids.

\end{abstract}

\section{introduction}

Generally, the mathematical study of braids can be traced back to the seminar work of Artin in \cite{A1, A2} around the first half of the last century.  The theory of braids studies the concept of braids and the presentations of braid groups as well as various generalizations arising from various branches of the mathematics. For example, braid groups can be interpreted as the fundamental groups of certain (unordered) configuration spaces by Fox and Neuwirth \cite{FN2}.  Brieskorn~\cite{B1, B2}  extended the notion of the braid group to Artin groups or the generalized braid groups by associating to all finite Coxeter groups. So far, there have been many  interesting and strong links between the theory of braids and other various theories and areas, such as knot theory, group theory,  algebraic geometry, mathematical physics and so on (e.g., see \cite{A, Ar, BCW, Bir, BS, D, J}).

\vskip .2cm

As pointed out in \cite{LLL}, the theory of ordinary braids has two basic theoretical features: One is that each braid group is realized as the fundamental group of the orbit space of a geometric object with free action of a group; the other is that each braid group uniquely corresponds to a short exact sequence induced by the geometric object with free action.
In~\cite{LLL}, the theory of orbit braids has been established by making use of the construction of orbit configuration spaces (which can be understood as a generalization of classical configuration spaces), and generally it does not possess the above basic theoretical features. The theory of the orbit braids from orbit configuration spaces provides us with much more insights. Indeed, an orbit braid group can be large enough to contain various different braid groups as subgroups, but it can still be described in terms of homotopy (i.e.,  the so-called extended fundamental group defined in \cite{LLL}).

\vskip.2cm

Motivated by the work in \cite{LLL}, in this paper we consider the braids from chromatic configuration spaces (which can be regarded as another generalization of classical configuration spaces). This kind of braids  are quite different from the ordinary braids in the sense of Artin. Actually some strings of such a braid may intersect together but we can  untangle them by the construction of chromatic configuration spaces. Of course, the groups formed by such braids don't possess the above basic theoretical features of the theory of ordinary braids yet. The purpose of this paper is to deal with the theory of such braids from chromatic configuration spaces.

\vskip .2cm
Let $\Gamma$ be a simple graph without loops on the vertex set $[n]=\{1, ..., n\}$. By $E(\Gamma)$ we denote the set of edges of $\Gamma$, and  by $\overline{ij}$ we denote the edge adjacent to two vertices $i,j\in [n]$. Then  the \textit{chromatic configuration space} of a topological space $X$ over  $\Gamma$ is defined by
$$F(X, \Gamma)=\{(x_1,\dots,x_n)\in X^{\times n}| x_j\neq x_k  \hskip.1cm \text{for} \hskip.1cm \overline{jk}\in E(\Gamma) \}\footnote{If $X$ admits an effective action of a group $G$, then we can even define the \textit{chromatic orbit configuration space} $$F_G(X, \Gamma)=\{(x_1,\dots,x_n)\in X^{\times n}| G(x_j)\neq G(x_k)  \hskip.1cm \text{for} \hskip.1cm \overline{jk}\in E(\Gamma) \}$$ where $G(x)$ denotes the orbit at $x\in X$. Certainly we can also consider the braids from the chromatic orbit configuration spaces, but  more details will be involved. Here for our purpose we mainly  pay our attention on dealing with the theory of the braids from chromatic configuration spaces.}.$$
If we regard each point in $X$ as a color, then $F(X, \Gamma)$ exactly consists of all colorings by using all colors of $X$ to color vertices of $\Gamma$ in such a way that adjacent vertices always have different colors. So this is reason why we call $F(X, \Gamma)$ the chromatic configuration space of  $X$ over  $\Gamma$.  The definition of $F(X, \Gamma)$ first appeared in the work of  Eastwood and Huggett~\cite{EH}, where $F(X, \Gamma)$ was called the generalized configuration space therein. Clearly, if $\Gamma$ is a complete graph, then $F(X, \Gamma)$ is just the classical configuration space $F(X, n)$, so the chromatic configuration spaces enrich the world of classical configuration spaces, and in particular,  they also bring in much information of graphs to configuration spaces.

\vskip .2cm

The chromatic configuration space $F(X, \Gamma)$ possesses the distinctive properties in its own way.
First, the symmetric group $\Sigma_n$ does not act on $F(X,\Gamma)$ very well, but its role will be replaced by $\text{Aut}(\Gamma)$, the automorphism group of the graph $\Gamma$. However, the canonical action of $\Aut(\Gamma)$ on $F(X,\Gamma)$ is not free in general. This results in the failure of existence of fibre map, thus it is not surprising that the braid group from the chromatic configuration space that we will consider can not be realized as the fundamental group of some topological space. Second, $F(X,\Gamma)$ relies on the structure of $\Gamma$ heavily. It is well-known that the structure of graphs is complicated, especially, the exact status of $\Aut(\Gamma)$ is still unknown \cite{BW}. These two factors lead to difficulties when considering $F(X,\Gamma)$.

\vskip.2cm
Following the idea in \cite{LLL}, the concept of braid groups can actually be generalised to various configuration spaces.
Now let us deal with the theory of the braids from the chromatic configuration spaces. Assume that $X$ is a connected topological manifold of dimension at least two.
Choose a base point ${\bf x}$ in $F(X, \Gamma)$, which is of free orbit type under the action of $\Aut(\Gamma)$.
Then we will perform our work as follows:
 \begin{enumerate}
 \item Use the paths $\alpha$ with starting point ${\bf x}$ and ending point in the orbit $\Aut(\Gamma)({\bf x})$ at ${\bf x}$ in $F(X,\Gamma)$ to construct the geometric braids $c(\alpha)$ with $n$ strings in $X\times I$.  Such a geometric braid $c(\alpha)$ is quite different from one in the sense of Artin. Actually,
     there may be intersection points between different strings in $c(\alpha)$. Intersection or non-intersection of different strings in $c(\alpha)$ depend upon the structure of $\Gamma$. However, any intersection point appearing in different strings can be untangled by doing a small deformation on the path $\alpha$ in $F(X, \Gamma)$. This means that if two strings can intersect, then any crossing happening between them can be changed via the intersection point.
      Thus,  such a geometric braid $c(\alpha)$ is also called  a \textit{crossing-changeable  braid}.
 \item Define an equivalence relation among crossing-changeable braids $c(\alpha)$ such that  the relation agrees with the homotopy relation (relative to $\partial I$) among the paths $\alpha$. Furthermore, all equivalence classes of crossing-changeable braids
     form a group, denoted by $B(X, \Gamma)$, which is called the \textit{crossing-changeable braid group}. At the same time,
     all homotopy classes (relative to $\partial I$) of the paths with starting point ${\bf x}$ and ending point in $\Aut(\Gamma)({\bf x})$ in $F(X,\Gamma)$ can also form a group by equipping with an operation, denoted by $\pi_1^E(F(X, \Gamma), {\bf x}, \Aut(\Gamma)({\bf x}))$, which is called the \textit{extended fundamental group}\footnote{As Golasi\'nski Marek told us after this paper was submitted, the extended fundamental group is actually the fundamental group  of a transformation group in the sense of Rhodes in \cite{Rh}. In addition, as noted by Looijenga in~\cite{L}, such group is  also called an equivariant fundamental group, which can be regarded as an orbifold fundamental group.
     To be compatible with \cite{LLL}, here we still adopt the notation used in \cite{LLL}.} of $F(X, \Gamma)$, defined in~\cite{LLL}.
\end{enumerate}

The following theorem tells us that the crossing-changeable braid group $B(X, \Gamma)$ can be  described in terms of the extended fundamental group $\pi_1^E(F(X, \Gamma), {\bf x}, \Aut(\Gamma)({\bf x}))$ homotopically.

\begin{theorem}
The crossing-changeable braid group $B(X,\Gamma)$ is isomorphic to the extended fundamental group $\pi_1^E(F(X,\Gamma),{\bf x}, \Aut(\Gamma)({\bf x}))$.
\end{theorem}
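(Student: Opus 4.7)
The plan is to build an explicit group isomorphism
\[
\Phi\colon \pi_1^E(F(X,\Gamma),{\bf x},\Aut(\Gamma)({\bf x})) \longrightarrow B(X,\Gamma),\qquad [\alpha]\longmapsto [c(\alpha)],
\]
using the geometric braid assignment $\alpha\mapsto c(\alpha)$ already described in the construction (1) of the introduction. Because the equivalence relation on crossing-changeable braids was deliberately defined (construction (2)) so as to agree with the rel-$\partial I$ homotopy relation on the paths $\alpha$, the candidate map $\Phi$ is essentially forced on us, and the task reduces to verifying four things: well-definedness, surjectivity, injectivity, and compatibility with the group operation.

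First I would show $\Phi$ is well-defined and surjective. Given a homotopy $H\colon\alpha\simeq \alpha'$ (rel $\partial I$) inside $F(X,\Gamma)$, applying the assignment $c(\cdot)$ componentwise to $H$ yields a one-parameter family of crossing-changeable braids interpolating between $c(\alpha)$ and $c(\alpha')$; by the very definition of the equivalence relation on braids this family is an equivalence, so $[c(\alpha)]=[c(\alpha')]$. Surjectivity is even more direct: every crossing-changeable braid arises in the form $c(\alpha)$ for some path $\alpha$ with ${\bf x}$ as starting point and endpoint in $\Aut(\Gamma)({\bf x})$, which is how such braids were introduced in the first place.

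Next I would verify that $\Phi$ is a homomorphism. The group operation on the extended fundamental group is concatenation of paths, where the endpoint $g\cdot{\bf x}\in\Aut(\Gamma)({\bf x})$ of $\alpha$ is used to translate the second path by $g$ so that concatenation makes sense; correspondingly, the group operation on $B(X,\Gamma)$ is stacking of braids in $X\times I$ followed by rescaling, together with the same $\Aut(\Gamma)$-translation on the second factor. Under the componentwise construction of $c(\alpha)$, concatenation of two paths translates term-by-term to the stacking of the associated braids, so $c(\alpha\cdot\beta)$ and $c(\alpha)\cdot c(\beta)$ coincide as geometric braids. Identities and inverses match because the constant path at ${\bf x}$ gives the trivial braid and the time-reversal of $\alpha$ produces the vertical mirror of $c(\alpha)$, which stacks with $c(\alpha)$ to a braid equivalent to the trivial one via the obvious homotopy collapsing it to a constant.

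The main obstacle is the injectivity of $\Phi$, i.e.\ showing that if $c(\alpha)$ and $c(\beta)$ are equivalent as crossing-changeable braids then $\alpha\simeq\beta$ rel $\partial I$ in $F(X,\Gamma)$. Here one must take the equivalence of braids seriously: two braids are declared equivalent through deformations that are allowed to pass one string through another precisely at the forbidden intersection points (those pairs $(j,k)$ with $\overline{jk}\notin E(\Gamma)$). The key point is that such crossing changes, which correspond to moving the $j$-th and $k$-th coordinates across each other, are still legitimate motions inside $F(X,\Gamma)$ because the constraint $x_j\neq x_k$ is not imposed there. Thus any admissible homotopy of braids lifts to an honest homotopy of paths in $F(X,\Gamma)$; recording this carefully, by slicing the ambient isotopy of $X\times I$ level by level and re-reading it as a one-parameter family of configurations, yields the required homotopy $\alpha\simeq\beta$ and closes the argument.
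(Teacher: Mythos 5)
Your proposal is correct and follows essentially the same route as the paper: the map $[\alpha]\mapsto[c(\alpha)]$, multiplicativity read off from the compatibility of the operations $\bullet$ and $\star$ (both defined via $\alpha\circ\beta_\sigma$), surjectivity by construction, and well-definedness plus injectivity obtained exactly as in the paper's Proposition~\ref{translation}, by passing back and forth between a homotopy $h=(h_1,\dots,h_n)$ rel $\partial I$ in $F(X,\Gamma)$ and the level-preserving string homotopies $\widehat{h_i}(s,t)=(h_i(s,t),s)$ of Definition~\ref{equivalent}. Your ``slicing level by level'' step for injectivity is precisely the paper's converse direction of that proposition, so there is no substantive difference to report.
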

Compared with ordinary braid groups, the structure of $B(X,\Gamma)$ seems to be  more complicated.
As is known,  a braid in the ordinary braid group $B(X, n)$ can always be decomposed at each crossing of $n$ strings, so what we need to do is to  consider the braids with only one crossing and  then to find out the relations among them. However,  for a crossing-changeable braid in $B(X, \Gamma)$, doing the same decomposition does not work very well. In fact, since once we decompose it at each crossing, the induced braids may not lie in $B(X,\Gamma)$. This is the key point where difficulty lies in.
\vskip.2cm

Like the theory of ordinary braid groups, $B(X,\Gamma)$ contains a subgroup $P(X,\Gamma)$, called the \textit{ pure crossing-changeable braid group}, which is exactly isomorphic to the fundamental group $\pi_1(F(X, \Gamma), {\bf x})$. On the other hand, each class of $B(X,\Gamma)$ determines a unique element in $\Aut(\Gamma)$. This leads us to obtain a short exact sequence.

\begin{theorem}
There is a short exact sequence
$$1\longrightarrow P(X,\Gamma)\longrightarrow B(X,\Gamma)\longrightarrow \Aut(\Gamma)\longrightarrow 1 $$
\end{theorem}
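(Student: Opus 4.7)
The plan is to construct a surjective homomorphism $B(X,\Gamma)\to \Aut(\Gamma)$ whose kernel is $P(X,\Gamma)$. By Theorem 1.1, $B(X,\Gamma)$ is isomorphic to $\pi_1^E(F(X,\Gamma),{\bf x},\Aut(\Gamma)({\bf x}))$, so it is most convenient to define the projection on the extended fundamental group and then transport the resulting sequence through this isomorphism.

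The projection $p\colon \pi_1^E(F(X,\Gamma),{\bf x},\Aut(\Gamma)({\bf x}))\to \Aut(\Gamma)$ I would define by sending $[\alpha]$ to the unique $\sigma\in \Aut(\Gamma)$ with $\alpha(1)=\sigma({\bf x})$. Uniqueness is guaranteed because ${\bf x}$ was chosen of free orbit type under $\Aut(\Gamma)$, and well-definedness follows because any homotopy relative to $\partial I$ preserves endpoints and thus preserves $\sigma$. The map $p$ is a group homomorphism because the product on $\pi_1^E$ has the form $[\alpha]\cdot[\beta]=[\alpha\cdot \sigma(\beta)]$ with $\sigma=p([\alpha])$, and the translated path $\sigma(\beta)$ ends at $\sigma\tau({\bf x})$ whenever $\beta$ ends at $\tau({\bf x})$.

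Next I would identify the kernel. An element $[\alpha]\in \pi_1^E$ lies in $\ker p$ precisely when its representative $\alpha$ is a loop based at ${\bf x}$; on such loops the extended homotopy relation restricts to ordinary homotopy relative to $\partial I$, so $\ker p\cong \pi_1(F(X,\Gamma),{\bf x})$, which is exactly $P(X,\Gamma)$ by the identification recalled just before the theorem. For surjectivity one needs, for each $\sigma\in \Aut(\Gamma)$, a path in $F(X,\Gamma)$ from ${\bf x}$ to $\sigma({\bf x})$. Such a path exists because $F(X,\Gamma)$ is path-connected: the inequalities $x_j\neq x_k$ for $\overline{jk}\in E(\Gamma)$ cut out subsets of codimension at least two from the connected manifold $X^{\times n}$, using the hypothesis $\dim X\geq 2$.

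The proof is essentially a packaging argument rather than a single hard step, and the main subtlety to keep straight is that $p$ must be defined at the level of $\pi_1^E$, not on concrete crossing-changeable braids, since a direct attempt to decompose a braid in $B(X,\Gamma)$ at its crossings can leave the ambient group (as noted in the paragraph following Theorem 1.1). Once $p$ is seen to be a well-defined, surjective homomorphism with kernel $P(X,\Gamma)$, exactness at every term of the sequence is immediate, and transporting $p$ through the isomorphism of Theorem 1.1 yields the claimed short exact sequence.
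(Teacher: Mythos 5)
Your proposal is correct and follows essentially the same route as the paper: the paper likewise defines $\Phi([c(\alpha)])=\sigma$ via the unique $\sigma$ with $\alpha(1)={\bf x}_\sigma$ (well-defined by the free orbit type of ${\bf x}$ and by homotopies preserving endpoints, as recorded in its Lemma~\ref{unique}), verifies the homomorphism property from $\alpha\circ\beta_\sigma(1)={\bf x}_{\sigma\tau}$, gets surjectivity from connectedness of $F(X,\Gamma)$, and identifies the kernel with $P(X,\Gamma)$. Your only departures are cosmetic --- you work on $\pi_1^E$ and transport through the isomorphism of Theorem~\ref{homotopy description} rather than defining $\Phi$ directly on $B(X,\Gamma)$, and you spell out the codimension-two argument for connectedness that the paper leaves implicit.
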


Using the inclusion $i: F(X, n)\hookrightarrow F(X, \Gamma)$, we also study the relation between the braid groups or the extended fundamental groups of $F(X, n)$ and $F(X, \Gamma)$, see Proposition~\ref{extended exact}.

\vskip .2cm
Finally we focus on the case of $X=\mathbb{C}$. By the presentation of the classical pure braid group, we  can give a presentation of
$P(\mathbb{C},\Gamma)$ for any finite simple graph $\Gamma$ (see Theorem \ref{pure  braid group}). However, the determination of
$B(\mathbb{C}, \Gamma)$ is not an easy task since the group structure of $\Aut(\Gamma)$ is still unknown in general.
Making use of the group extension theory, we obtain an explicit presentation of $B(\mathbb{C}, C_n)$ where $C_n$ is the cycle graph.

\vskip.2cm
The paper is organized as follows. Section 2 is the main part of this paper, where we will discuss how to define a crossing-changeable braid and the equivalence relation among crossing-changeable braids. Then we construct the crossing-changeable braid group and give it a homotopy description. Furthermore, we establish a short exact sequence about $B(X,\Gamma)$ and $P(X,\Gamma)$. We also discuss the connection between the braid groups and the extended fundamental groups of $F(X, n)$ and $F(X, \Gamma)$. In Section 3, we pay our attention to the case $X=\mathbb{C}$. We calculate $P(\mathbb{C},\Gamma)$ for arbitrary finite simple graph $\Gamma$, and $B(\mathbb{C},C_n)$.

\section{Crossing-changeable braids }

Given a topological space $X$ and a simple graph $\Gamma$ without loops  on the vertex set $[n]=\{1, ..., n\}$.
 Then we have the chromatic configuration space of $X$ over  $\Gamma$  defined by
$$F(X, \Gamma)=\{(x_1,\dots,x_n)\in X^{\times n}| x_j\neq x_k  \quad \text{for} \quad \overline{jk}\in E(\Gamma) \}$$
with subspace topology. In the case when $\Gamma$ is a complete graph, then $F(X, \Gamma)$ is the classical configuration space.
However, $F(X,n)$ is always a subspace of $F(X, \Gamma)$ in general.
\vskip .2cm

Let $\Aut(\Gamma)$ denote the automorphism group of  $\Gamma$
$$\Aut(\Gamma)=\{\sigma\in\Sigma_n| \overline{\sigma(j)\sigma(k)}\in E(\Gamma) \iff \overline{jk}\in E(\Gamma) \}$$
which is a subgroup of the symmetric group $\Sigma_n$.
It is easy to see that
 the chromatic configuration space $F(X, \Gamma)$ admits a natural action of $\Aut(\Gamma)$, defined by
 $$(\sigma, (x_1, ..., x_n))\longmapsto (x_{\sigma(1)}, ..., x_{\sigma(n)}).$$
However, this  action on $\Aut(\Gamma)$ on $F(X,\Gamma)$ is generally non-free. Of course, when $\Gamma$ is a complete graph, $\Aut(\Gamma)$ just becomes the symmetric group $\Sigma_n$, so that the natural action of $\Sigma_n$ on $F(X,n)$ is free.
\vskip.5cm

In the following, we shall pay attentions to the case in which $X$ is a connected topological manifold of dimension greater than one. Then it is easy to see that $F(X, \Gamma)$ is connected.
\vskip.2cm

\subsection{Definition and equivalence relation of crossing-changeable braids}
Given a path $\alpha=(\alpha_1,\dots,\alpha_n): I \longrightarrow F(X,\Gamma)$,  where $\alpha_i: I\longrightarrow X$ is the $i$-th coordinate of $\alpha$. Then $\alpha$ uniquely determines a configuration $c(\alpha)=\{c(\alpha_1),\dots,c(\alpha_n) \}$ of $n$ strings in $X\times I$, where $c(\alpha_i)=\{(\alpha_i(s),s)|s\in I \}$. By the construction of $F(X, \Gamma)$, it is easy to see that two strings $c(\alpha_j)$ and $c(\alpha_k)$ may intersect if $\overline{jk}\not\in E(\Gamma)$. In addition, it should also be emphasized that all $n$ strings in $c(\alpha)$ are unordered in $X\times I$.
\vskip .2cm
 Now let us choose a base point ${\bf x}=(x_1, ..., x_n)$ in $F(X, \Gamma)$ such that $x_i$'s are pairwisely distinct in $X$, so
 ${\bf x}$ is of free orbit type under the action of $\Aut(\Gamma)$. Given a $\sigma\in \Aut(\Gamma)$, we denote $(x_{\sigma(1)}, ..., x_{\sigma(n)})$ by ${\bf x}_\sigma$, and denote $(\alpha_{\sigma(1)},\dots,\alpha_{\sigma(n)})$ by $\alpha_\sigma$. Then we give the following definition in the sense of Artin~\cite{A1, A2}.

\begin{definition}\label{graphic braid}
Let $\alpha=(\alpha_1,\dots,\alpha_n):I\longrightarrow F(X,\Gamma)$ be a path such that $\alpha(0)={\bf x}$ and $\alpha(1)={\bf x}_\sigma$ for some $\sigma\in \Aut(\Gamma)$. Then $c(\alpha)$ is called a \textit{crossing-changeable braid} in $X\times I$.
\end{definition}

\begin{remark}\label{geometric braid}
The crossing-changeable braids are quite different from ordinary braids in the sense of Artin. Indeed, there may be intersection points among all different strings in a crossing-changeable braid. In particular, some pairs of strings can intersect, and some pairs of strings cannot  intersect strictly. This heavily depends upon the structure of $\Gamma$.
\vskip .2cm

For example, let $\Gamma$ be the cycle graph $C_4$, then $\Aut(\Gamma)$ is the dihedral group $D_{8}=\langle a,b| a^4=b^2=e, b^{-1}ab=b^{-1}  \rangle$,  where $a$ denotes the cyclic permutation $(1234)$, and $b$ denotes the permutation $(24)$.
	\vskip.1cm
\begin{center}
	\begin{tikzpicture}

	\draw [line width=0.04cm](-5,0)--(-1,0);
	\draw [line width=0.04cm](-5,-2)--(-1,-2);
	\node[above] at (-4.5,0) {$1$};
	\node[above] at (-3.5,0) {$2$};
	\node[above] at (-2.5,0) {$3$};
	\node[above] at (-1.5,0) {$4$};
	\draw (-4.5,0)--(-3.5,-2);
	\draw (-3.5,0)--(-2.5,-2);
	\draw (-2.5,0)--(-1.5,-2);
	\draw (-1.5,0)--(-2.2,-0.467);
    \draw (-2.3,-0.533)--(-3.7,-1.467);
    \draw (-3.8,-1.533)--(-4.5,-2);
	
	\draw[line width=0.04cm] (1,0)--(5,0);
	\draw[line width=0.04cm] (1,-2)--(5,-2);
	\node[above] at (1.5,0) {$1$};
	\node[above] at (2.5,0) {$2$};
	\node[above] at (3.5,0) {$3$};
	\node[above] at (4.5,0) {$4$};
	\draw (1.5,0)--(1.5,-2);
	\draw (2.5,0)--(4.5,-2);
	\draw (3.5,0)--(4.5,-1); \draw (4.5,-1)--(4.1,-1.4); \draw (3.9,-1.6)--(3.5,-2);
	\draw (4.5,0)--(3.9,-0.3); \draw (3.75,-0.375)--(2.5,-1); \draw (2.5,-1)--(2.5,-2);
	\end{tikzpicture}
\end{center}	
\vskip .1cm
	\noindent For two braids as shown in the figures above, we notice that the second and fourth strings can intersect since  $\overline{24}\notin E(\Gamma)$. This is different from classical braids.
However, we see that  the first and fourth strings (the third and fourth strings respectively)
can not intersect since $\overline{14}, \overline{34}\in E(\Gamma)$. This just agrees with classical braids.
\vskip .2cm

By the construction of chromatic configuration spaces, obviously we can untie the intersection points appearing in the second and fourth strings of  two braids as shown in the figures above by doing  small deformations. Essentially nothing is changed up to homotopy.
\end{remark}

The observation in Remark~\ref{geometric braid} gives us an insight to the equivalence of crossing-changeable braids.
Set
$$P(F(X, \Gamma), {\bf x}, \Aut(\Gamma)({\bf x}))=\{\alpha: I\longrightarrow F(X, \Gamma)\big| \alpha(0)={\bf x}, \text{ and } \alpha(1)\in  \Aut(\Gamma)({\bf x})\}$$
which consists of those paths with  restricted endpoints  in $F(X, \Gamma)$.

\begin{definition}\label{equivalent}
Let $\alpha$ and  $\beta$ be two paths in $P(F(X, \Gamma), {\bf x}, \Aut(\Gamma)({\bf x}))$. We say that two corresponding crossing-changeable braids $c(\alpha)$ and $c(\beta)$ are \textit{ equivalent}, denoted by $c(\alpha)\sim c(\beta)$, if there exist $n$ homotopy maps $\widehat{h_i}: I\times I\longrightarrow X\times I$ given by $\widehat{h_i}(s,t)=(h_i(s,t),s)$, $i=1,\dots,n$, such that
\begin{enumerate}
\item[(1)] $\widehat{h_i}(s,0)=c(\alpha_i)$ and $\widehat{h_i}(s,1)=c(\beta_i)$;
\item[(2)] ${h_i}(0,t)=\alpha_i(0)=\beta_i(0)$ and ${h_i}(1,t)=\alpha_i(1)=\beta_i(1)$;
\item[(3)] For any $(s,t)\in I\times I$, if $\overline{jk}\in E(\Gamma)$ then $h_j(s,t)\neq h_k(s,t)$.
\end{enumerate}
\end{definition}

Actually, the  equivalence  of the crossing-changeable braids $c(\alpha)$ and $c(\beta)$ can be detected by  the homotopy equivalence of the corresponding paths $\alpha$ and  $\beta$.

\begin{proposition}\label{translation}
Let $\alpha$ and  $\beta$ be two paths in $P(F(X, \Gamma), {\bf x}, \Aut(\Gamma)({\bf x}))$. Then $\alpha\simeq \beta \hskip.1cm\text{rel} \hskip.1cm\partial I$ if and only if $c(\alpha)\sim c(\beta)$.
\end{proposition}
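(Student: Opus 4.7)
The plan is to prove this essentially by unpacking the definitions: both the homotopy of paths in $F(X,\Gamma)$ and the equivalence of crossing-changeable braids are encoded by the same data, namely $n$ maps $h_i \colon I\times I \to X$ subject to matching boundary and non-collision conditions. So I would set up a dictionary between a homotopy $H \colon I\times I \to F(X,\Gamma)$ rel $\partial I$ from $\alpha$ to $\beta$ and the family of maps $\widehat{h_i}$ in Definition~\ref{equivalent}, and then check that each of conditions (1)--(3) of the definition corresponds to one of the requirements for $H$ to be a well-defined path homotopy into $F(X,\Gamma)$.

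First, for the forward direction, I would assume a path homotopy $H \colon I\times I \to F(X,\Gamma)$ with $H(s,0)=\alpha(s)$, $H(s,1)=\beta(s)$, and $H(0,t)=\mathbf{x}$, $H(1,t)=\mathbf{x}_\sigma$. Writing $H=(h_1,\dots,h_n)$ in coordinates and setting $\widehat{h_i}(s,t)=(h_i(s,t),s)$, the endpoint conditions on $H$ translate directly into conditions (1) and (2) of Definition~\ref{equivalent}. The crucial point is that $H$ taking values in $F(X,\Gamma)$ means precisely that $h_j(s,t)\neq h_k(s,t)$ whenever $\overline{jk}\in E(\Gamma)$, which is condition (3). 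Hence $c(\alpha)\sim c(\beta)$.

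For the converse, I would start from the maps $\widehat{h_i}(s,t)=(h_i(s,t),s)$ supplied by $c(\alpha)\sim c(\beta)$, discard the trivial second coordinate, and assemble $H(s,t)=(h_1(s,t),\dots,h_n(s,t))$. Condition (3) of the definition forces $H(s,t)\in F(X,\Gamma)$ for every $(s,t)$, so $H$ is a well-defined continuous map into $F(X,\Gamma)$. Conditions (1) and (2) then recover $H(s,0)=\alpha(s)$, $H(s,1)=\beta(s)$, together with $H(0,t)=\mathbf{x}$ and $H(1,t)=\mathbf{x}_\sigma$, so $H$ is a path homotopy rel $\partial I$ from $\alpha$ to $\beta$.

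Since each direction is a straightforward repackaging of the same data, there is no serious obstacle; the only mild subtlety worth emphasising is that the second coordinate $s$ in $\widehat{h_i}(s,t)=(h_i(s,t),s)$ plays a purely bookkeeping role (recording the height in $X\times I$ at which the strand lies) and can be ignored when passing to the homotopy in $F(X,\Gamma)$. I would therefore present the proof as a short verification, making explicit the correspondence $H \longleftrightarrow (\widehat{h_1},\dots,\widehat{h_n})$ and matching the three conditions in Definition~\ref{equivalent} with the three requirements (codomain in $F(X,\Gamma)$, endpoint values at $t=0,1$, and endpoint values at $s=0,1$) for a path homotopy rel $\partial I$.
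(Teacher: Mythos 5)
Your proposal is correct and follows essentially the same route as the paper's own proof: both directions are handled by the direct dictionary $h=(h_1,\dots,h_n)\longleftrightarrow(\widehat{h_1},\dots,\widehat{h_n})$ with $\widehat{h_i}(s,t)=(h_i(s,t),s)$, matching conditions (1)--(3) of Definition~\ref{equivalent} against the requirements for a homotopy rel $\partial I$ into $F(X,\Gamma)$. Your version is if anything slightly more explicit than the paper's (which leaves the condition-checking implicit), but there is no substantive difference.
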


\begin{proof}
Assume that $h=(h_1, ..., h_n): I\times I\longrightarrow F(X,\Gamma)$ is a homotopy relative to $\partial I$ from $\alpha$ to $\beta$. Then we can use $h$ to define $n$ homotopy maps
$$\widehat{h_i}:I\times I\longrightarrow X\times I$$
by $\widehat{h_i}(s,t)=(h_i(s,t),s)$, satisfying the three conditions of Definition~\ref{equivalent}. Thus $c(\alpha)$ and $c(\beta)$ are equivalent.

\vskip .2cm
Conversely, suppose that $c(\alpha)$ and $c(\beta)$ are equivalent. Then there are $n$ homotopy maps
$$\widehat{h_i}:I\times I\longrightarrow X\times I$$
by $\widehat{h_i}(s,t)=(h_i(s,t),s)$, which satisfy the three conditions of Definition~\ref{equivalent}. These $h_i$'s determine a map $h=(h_1, ..., h_n): I\times I\longrightarrow F(X,\Gamma)$, which is just the homotopy relative to $\partial I$ from $\alpha$ to $\beta$.
\end{proof}

\subsection{Crossing-changeable braid groups and extended fundamental groups}

The concept of extended fundamental groups was given in a general way in \cite[Section 4]{LLL}, and it plays an important role on the study of orbit braids.

\vskip .2cm
By $\pi_1^E(F(X,\Gamma),{\bf x}, \Aut(\Gamma)({\bf x}))$ we denote the extended fundamental group of $F(X, \Gamma)$ with action of group $\Aut(\Gamma)$ at the base point ${\bf x}$,
 which consists of the homotopy classes (relative to $\partial I$) of all paths  in $P(F(X, \Gamma), {\bf x}, \Aut(\Gamma)({\bf x}))$, with the operation $\bullet$ defined by
 $$[\alpha]\bullet[\beta]=[\alpha\circ \beta_\sigma]$$
  for two paths $\alpha$ and $\beta$ with $\alpha(1)={\bf x}_\sigma$ and $\beta(1)={\bf x}_\tau$ in $P(F(X, \Gamma), {\bf x}, \Aut(\Gamma)({\bf x}))$, where
  $\circ$ is the usual operation between paths. Of course,  generally $[\alpha]\bullet[\beta]\not=[\beta]\bullet[\alpha]$.
  Clearly, the fundament group $\pi_1(F(X,\Gamma),{\bf x})$ is a subgroup of $\pi_1^E(F(X,\Gamma),{\bf x}, \Aut(\Gamma)({\bf x}))$. Note that all paths of $P(F(X, \Gamma), {\bf x}, \Aut(\Gamma)({\bf x}))$ are not necessarily closed.

    \vskip .2cm

 As mentioned before, all strings of each crossing-changeable braid $c(\alpha)$ at the base point ${\bf x}$ are unordered in $X\times I$, where $\alpha \in P(F(X, \Gamma), {\bf x}, \Aut(\Gamma)({\bf x}))$. This means that for any $\sigma\in \Aut(\Gamma)$, actually $c(\alpha)=c(\alpha_\sigma)$  although generally $\alpha(0)\not=\alpha_\sigma(0)$
 and  $\alpha(1)\not=\alpha_\sigma(1)$. We note that the endpoints $\alpha_\sigma(0)$
 and  $\alpha_\sigma(1)$ of $\alpha_\sigma$ are still in the orbit $\Aut(\Gamma)({\bf x})$. Thus, the set $$\{c(\alpha)|\alpha\in P(F(X, \Gamma), {\bf x}, \Aut(\Gamma)({\bf x}))\}$$
denoted by $\mathcal{C}(X\times I, \Aut(\Gamma)({\bf x}))$ consists of all possible crossing-changeable braids with endpoints lying in  $\Aut(\Gamma)({\bf x})$.

\vskip .2cm
Geometrically, there is a natural operation on all crossing-changeable braids in $\mathcal{C}(X\times I, \Aut(\Gamma)({\bf x}))$ by gluing the ending point of a crossing-changeable braid and the starting point of another crossing-changeable braid. This operation can also be defined in terms of the homotopy of paths. We state it as follows:
$$c(\alpha)*c(\beta)=c(\alpha\circ \beta_\sigma)$$
for two crossing-changeable braids $c(\alpha)$ and $c(\beta)$ with $\alpha(1)={\bf x}_\sigma$ and $\beta(1)={\bf x}_\tau$. 

\vskip .2cm
Let $B(X, \Gamma)$ denote the set formed by the equivalence classes of all crossing-changeable braids in $\mathcal{C}(X\times I, \Aut(\Gamma)({\bf x}))$. Then the operation $*$ on $\mathcal{C}(X\times I, \Aut(\Gamma)({\bf x}))$ induces the operation $\star$ on $B(X, \Gamma)$ as follows:
$$[c(\alpha)]\star[c(\beta)]=[c(\alpha)*c(\beta)]=[c(\alpha\circ \beta_\sigma)]$$
for two paths $\alpha$ and $\beta$ with $\alpha(1)={\bf x}_\sigma$ and $\beta(1)={\bf x}_\tau$ in $P(F(X, \Gamma), {\bf x}, \Aut(\Gamma)({\bf x}))$.

\vskip .2cm

It is not difficult to see that $B(X, \Gamma)$ forms a group under the operation $\star$, and generally it is not abelian.

\begin{definition}\label{graphic braid group}
 The group $B(X,\Gamma)$ is called the \textit{crossing-changeable braid group} of $F(X, \Gamma)$.
Those classes $[c(\alpha)]$ with $\alpha(1)={\bf x}$ in $B(X,\Gamma)$ form a subgroup, which is called the \textit{pure crossing-changeable braid group},  denoted  by $P(X,\Gamma)$.
\end{definition}

There is a homotopy description for these two  braid groups.

\begin{theorem}\label{homotopy description}\
\begin{enumerate}
\item[(1)]
The crossing-changeable braid group $B(X,\Gamma)$ is isomorphic to the extended fundamental group $\pi_1^E(F(X,\Gamma),{\bf x}, \Aut(\Gamma)({\bf x}))$;
\item[(2)]
The pure crossing-changeable braid group $P(X, \Gamma)$ is isomorphic to the ordinary fundamental group $\pi_1(F(X,\Gamma),{\bf x})$.
\end{enumerate}
\end{theorem}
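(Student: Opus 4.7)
The plan is to construct a single explicit map $\Phi : \pi_1^E(F(X,\Gamma),{\bf x}, \Aut(\Gamma)({\bf x})) \longrightarrow B(X,\Gamma)$ on representatives by $[\alpha] \mapsto [c(\alpha)]$, prove it is a group isomorphism, and then observe that it restricts to an isomorphism $\pi_1(F(X,\Gamma),{\bf x}) \cong P(X,\Gamma)$, handling both parts at once. The heavy lifting is already available: Proposition \ref{translation} supplies a dictionary between the equivalence of crossing-changeable braids and homotopy (rel $\partial I$) of the defining paths.

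First I would verify $\Phi$ is well-defined: if $\alpha \simeq \beta$ rel $\partial I$, Proposition \ref{translation} gives $c(\alpha) \sim c(\beta)$, hence $[c(\alpha)] = [c(\beta)]$. Next I would check the homomorphism property, which is essentially a matter of matching definitions. Given $[\alpha], [\beta] \in \pi_1^E$ with $\alpha(1) = {\bf x}_\sigma$, the extended operation is $[\alpha] \bullet [\beta] = [\alpha \circ \beta_\sigma]$, while the braid operation is $[c(\alpha)] \star [c(\beta)] = [c(\alpha \circ \beta_\sigma)]$; thus $\Phi([\alpha] \bullet [\beta]) = \Phi([\alpha]) \star \Phi([\beta])$ by inspection. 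Surjectivity is immediate, since every element of $B(X,\Gamma)$ is represented by some $c(\alpha)$ with $\alpha \in P(F(X,\Gamma),{\bf x},\Aut(\Gamma)({\bf x}))$. For injectivity, $\Phi([\alpha]) = \Phi([\beta])$ means $c(\alpha) \sim c(\beta)$, and the converse half of Proposition \ref{translation} yields $\alpha \simeq \beta$ rel $\partial I$, so $[\alpha] = [\beta]$.

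For part (2), a class $[c(\alpha)] \in B(X,\Gamma)$ lies in $P(X,\Gamma)$ precisely when $\alpha(1) = {\bf x}$, i.e.\ when $\alpha$ is a loop based at ${\bf x}$, so the corresponding permutation is $\sigma = e$. On such loops the operation $\bullet$ collapses to the ordinary path concatenation $[\alpha] \bullet [\beta] = [\alpha \circ \beta]$, so $\Phi$ restricts to a well-defined isomorphism $\pi_1(F(X,\Gamma),{\bf x}) \to P(X,\Gamma)$, taking a loop class to the corresponding pure braid class.

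There is no genuine obstacle in this argument; the only subtle point worth flagging is the well-definedness of the operation $\star$ on equivalence classes in $B(X,\Gamma)$. This can either be verified directly by concatenating homotopies of braids (once again via Proposition \ref{translation}) or, more efficiently, deduced a posteriori by transporting the group structure along the bijection $\Phi$ from $\pi_1^E$. Either route introduces no new ideas beyond what Proposition \ref{translation} already provides.
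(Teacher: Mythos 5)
Your proposal is correct and follows essentially the same route as the paper: the authors also define the map $[\alpha]\mapsto[c(\alpha)]$, verify the homomorphism property by unwinding the definitions of $\bullet$ and $\star$, invoke Proposition~\ref{translation} for bijectivity, and obtain part (2) as the restriction to $\pi_1(F(X,\Gamma),{\bf x})$. Your added remark on the well-definedness of $\star$ is a reasonable extra precaution but does not change the argument.
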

\begin{proof}
 Define the map $\varphi: \pi_1^E(F(X,\Gamma),{\bf x}, \Aut(\Gamma)({\bf x}))\longrightarrow B(X,\Gamma)$ given by $$\varphi([\alpha])=[c(\alpha)].$$ For two paths $\alpha$ and $\beta$ with $\alpha(1)={\bf x}_\sigma$ and $\beta(1)={\bf x}_\tau$, a direct check shows that
   $$\varphi([\alpha]\bullet[\beta])=\varphi([\alpha\circ\beta_\sigma])=[c(\alpha\circ\beta_\sigma)]
   [c(\alpha)*c(\beta)]= [c(\alpha)]\star[c(\beta)]=\varphi([\alpha])\star\varphi([\beta])$$
    so $\varphi$ is a group homomorphism. Furthermore it follows from  Proposition~\ref{translation} that $\varphi$ is an isomorphism, as desired.

    \vskip .2cm

The  isomorphism in (2) is just the restriction of $\varphi$ to $\pi_1(F(X,\Gamma),{\bf x})$.
\end{proof}

\begin{lemma}\label{unique}
Each $[c(\alpha)]$ in $B(X,\Gamma)$ determines a unique element $\sigma\in Aut(\Gamma)$, where $\alpha(1)={\bf x}_\sigma$.
\end{lemma}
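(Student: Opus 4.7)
The plan is to establish well-definedness: if $c(\alpha)\sim c(\beta)$ with $\alpha(1)={\bf x}_\sigma$ and $\beta(1)={\bf x}_\tau$, then I must show $\sigma=\tau$. The strategy is to bounce the problem over to the path level via Proposition~\ref{translation}, where the conclusion becomes essentially tautological, and then to invoke the free orbit type hypothesis on the base point.

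First I would fix two representatives $\alpha,\beta\in P(F(X,\Gamma),{\bf x},\Aut(\Gamma)({\bf x}))$ of the class $[c(\alpha)]$, so that by definition $c(\alpha)\sim c(\beta)$. Applying Proposition~\ref{translation} in the nontrivial direction, the equivalence of the crossing-changeable braids $c(\alpha)$ and $c(\beta)$ forces $\alpha\simeq\beta$ rel $\partial I$. In particular, any relative homotopy fixes the endpoints, so $\alpha(1)=\beta(1)$, which reads ${\bf x}_\sigma={\bf x}_\tau$, i.e.\ $(x_{\sigma(1)},\ldots,x_{\sigma(n)})=(x_{\tau(1)},\ldots,x_{\tau(n)})$.

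Next I would finish by invoking the assumption that ${\bf x}$ was chosen of free orbit type under $\Aut(\Gamma)$. Concretely, the coordinates $x_1,\ldots,x_n$ are pairwise distinct in $X$, so ${\bf x}_\sigma={\bf x}_\tau$ forces $x_{\sigma(i)}=x_{\tau(i)}$ and hence $\sigma(i)=\tau(i)$ for each $i$, giving $\sigma=\tau$. Conversely, existence is built into the definition of a crossing-changeable braid: every representing path $\alpha$ does end at some ${\bf x}_\sigma$ with $\sigma\in\Aut(\Gamma)$, so the association $[c(\alpha)]\mapsto\sigma$ is both defined and single-valued.

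I do not expect any genuine obstacle here since Proposition~\ref{translation} has already done the technical work of translating between braid equivalence and relative homotopy of paths; the only point requiring care is the observation that ``unordered strings'' do not cause ambiguity, because the base point ${\bf x}$ is ordered and of free orbit type, which pins down the labeling of both endpoints of $\alpha$ canonically.
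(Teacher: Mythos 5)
Your proof is correct and takes the same route as the paper, which disposes of this lemma in one line as ``a direct consequence of Proposition~\ref{translation}.'' You have simply made explicit the two details the paper leaves implicit: that a homotopy rel $\partial I$ fixes $\alpha(1)=\beta(1)$, and that the free orbit type of ${\bf x}$ (pairwise distinct coordinates) lets one cancel from ${\bf x}_\sigma={\bf x}_\tau$ to $\sigma=\tau$.
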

\begin{proof}
This is a direct consequence of Proposition~\ref{translation}.
\end{proof}

\begin{theorem}\label{sequence}
There is the following natural exact sequence:
$$1\longrightarrow P(X,\Gamma)\longrightarrow B(X,\Gamma)\longrightarrow \Aut(\Gamma)\longrightarrow 1$$
\end{theorem}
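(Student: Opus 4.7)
The plan is to build an explicit surjective group homomorphism $\rho:B(X,\Gamma)\twoheadrightarrow \Aut(\Gamma)$ whose kernel is exactly $P(X,\Gamma)$; the displayed short exact sequence will then follow formally. The inclusion $P(X,\Gamma)\hookrightarrow B(X,\Gamma)$ is free of charge, since $P(X,\Gamma)$ was defined in Definition~\ref{graphic braid group} as the subgroup of classes $[c(\alpha)]$ with $\alpha(1)={\bf x}$.

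The first step is to define $\rho$ using Lemma~\ref{unique}: for a class $[c(\alpha)]\in B(X,\Gamma)$, the lemma assigns a unique $\sigma\in\Aut(\Gamma)$ with $\alpha(1)={\bf x}_\sigma$, and I set $\rho([c(\alpha)])=\sigma$. To see that $\rho$ is a homomorphism, I would compute endpoints directly from the definition of $\star$: given $\alpha(1)={\bf x}_\sigma$ and $\beta(1)={\bf x}_\tau$, the composition $[c(\alpha)]\star[c(\beta)]=[c(\alpha\circ\beta_\sigma)]$ has endpoint
$$\beta_\sigma(1)=(\beta_{\sigma(1)}(1),\ldots,\beta_{\sigma(n)}(1))=(x_{\tau(\sigma(1))},\ldots,x_{\tau(\sigma(n))})={\bf x}_{\tau\sigma},$$
so $\rho$ is compatible with the group operations under the usual composition convention in $\Aut(\Gamma)$.

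For surjectivity, I use the fact noted in Section~2 that $F(X,\Gamma)$ is path-connected because $X$ is a connected manifold of dimension at least two; thus for any $\sigma\in\Aut(\Gamma)$ I can choose a path $\alpha$ from ${\bf x}$ to ${\bf x}_\sigma$, and $\rho([c(\alpha)])=\sigma$. Finally, for the kernel: $\rho([c(\alpha)])=e$ iff $\alpha(1)={\bf x}_e={\bf x}$, iff $\alpha$ is a loop at ${\bf x}$, which by Definition~\ref{graphic braid group} is exactly the condition $[c(\alpha)]\in P(X,\Gamma)$. Putting this together gives the desired short exact sequence.

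There is no deep obstacle here; the statement is essentially a formal consequence of Lemma~\ref{unique}, the connectedness of $F(X,\Gamma)$, and the definition of the operation $\star$. The only point requiring care is the bookkeeping in the homomorphism check—specifically, fixing the convention for multiplication in $\Aut(\Gamma)$ so that one obtains a homomorphism rather than an anti-homomorphism from the formula ${\bf x}_{\tau\sigma}$ above.
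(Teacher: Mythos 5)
Your proposal is correct and takes essentially the same route as the paper's proof: define the map to $\Aut(\Gamma)$ via Lemma~\ref{unique}, check the homomorphism property from the endpoint of $\alpha\circ\beta_\sigma$, deduce surjectivity from the connectedness of $F(X,\Gamma)$, and identify the kernel with $P(X,\Gamma)$. If anything, your explicit computation $\beta_\sigma(1)={\bf x}_{\tau\sigma}$ and your remark about avoiding an anti-homomorphism are more careful than the paper, which simply writes $\alpha\circ\beta_\sigma(1)={\bf x}_{\sigma\tau}$, implicitly using the opposite (left-to-right) composition convention for permutations.
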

\begin{proof}
Take a class $[c(\alpha)]$ in $B(X,\Gamma)$, by Lemma~\ref{unique}, there is a unique element $\sigma\in \Aut(\Gamma)$ such that
 $\alpha(1)={\bf x}_\sigma$.
Define the map $\Phi: B(X,\Gamma)\longrightarrow \Aut(\Gamma)$ given by
$$\Phi([c(\alpha)])=\sigma.$$
For two paths $\alpha$ and $\beta$ with $\alpha(1)={\bf x}_\sigma$ and $\beta(1)={\bf x}_\tau$ in $P(F(X, \Gamma), {\bf x}, \Aut(\Gamma)({\bf x}))$, it is easy to see that $\alpha\circ \beta_\sigma(1)={\bf x}_{\sigma\tau}$, so
$\Phi([c(\alpha)]\star[c(\beta)])=\Phi([c(\alpha\circ\beta_\sigma)]=\sigma\tau$. Thus $\Phi$ is a group homomorphism.

\vskip .2cm

Choose an element $\sigma$ in $\Aut(\Gamma)$. Since $F(X, \Gamma)$ is connected, there must be a path $\alpha:I\longrightarrow
F(X, \Gamma)$ such that $\alpha(0)={\bf x}$ and $\alpha(1)={\bf x}_\sigma$, so $[c(\alpha)]\in B(X, \Gamma)$.
This means that $\Phi$ is surjective.

\vskip .2cm

For $[c(\alpha)]\in B(X, \Gamma)$, if $\Phi([c(\alpha)])$ is the identity of $\Aut(\Gamma)$, then $\alpha(1)={\bf x}$, so $\ker(\Phi)$ is isomorphic to $P(X,\Gamma)$. This completes the proof.
\end{proof}

\subsection{Associated with classical configuration space $F(X,n)$}

As a subspace of $F(X, \Gamma)$, the classical configuration space $F(X, n)$ admits a free action of the symmetric group $\Sigma_n$. Thus,  for any non-trivial subgroup $G$ of  $\Sigma_n$, $G$ can still  act on $F(X,n)$ freely. By \cite[Section 4]{LLL} we may define the extended fundamental group at the base point ${\bf x}$:
$$\pi_1^E(F(X,n), {\bf x}, G({\bf x}))=\{[\alpha]|\alpha:I\longrightarrow F(X,n) \text{ with } \alpha(0)={\bf x} \text{ and }
\alpha(1)\in G({\bf x})\}$$
with the operation given by
$$[\alpha]\bullet[\beta] =[\alpha\circ \beta_\sigma]$$
where $\sigma\in G\subset\Sigma_n$ is the unique element determined by $\alpha$ such that $\alpha(1)={\bf x}_\sigma$. Then we know from~\cite{LLL} that
there is the following short exact sequence
\begin{equation}
1\longrightarrow \pi_1(F(X,n), {\bf x})\longrightarrow \pi_1^E(F(X,n), {\bf x}, G({\bf x}))\longrightarrow
G\longrightarrow 1.
\end{equation}
Since $G$ is finite and the action of $G$ on $F(X, n)$ is free, by~\cite[Section 4, (B)]{LLL} we have that the group
$\pi_1^E(F(X,n), {\bf x}, G({\bf x}))$ is isomorphic to the fundamental group $\pi_1(F(X,n)/G, \overline{\bf x})$, where
$\overline{\bf x}$ is the image of ${\bf x}$ under the projection $F(X,n)\longrightarrow F(X,n)/G$.

\vskip .2cm
On the other hand, since $G({\bf x})\subset \Sigma_n({\bf x})$, it is easy to see that
$\pi_1^E(F(X,n), {\bf x}, G({\bf x}))\cong \pi_1(F(X,n)/G, \overline{\bf x})$ is a subgroup of
$\pi_1^E(F(X,n), {\bf x}, \Sigma_n({\bf x}))\cong \pi_1(F(X,n)/\Sigma_n, \widetilde{\bf x})$. It is well-known that
$\pi_1(F(X,n)/\Sigma_n, \widetilde{\bf x})$ is regarded as the braid group $B(X, n)$ in $X\times I$. Therefore,
$\pi_1^E(F(X,n), {\bf x}, G({\bf x}))\cong \pi_1(F(X,n)/G, \overline{\bf x})$ is  a subgroup of $B(X, n)$,
also denoted by $B(X, n)|_G$. Of course, this can also be seen from the theory of covering spaces.

\vskip .2cm
Now, choose $G$  as the subgroup $\Aut(\Gamma)$ of $\Sigma_n$,  let us discuss the relation between $B(X, \Gamma)$ and $B(X, n)|_{\Aut(\Gamma)}$. First, consider the natural inclusion $$i: F(X, n)\hookrightarrow F(X, \Gamma).$$
\begin{lemma}\label{epimorphism}
The induced map  $i_*: \pi_1(F(X,n), {\bf x})\rightarrow \pi_1( F(X,\Gamma), {\bf x})$ is
 an epimorphism.
\end{lemma}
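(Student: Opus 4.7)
The plan is a standard general-position argument, based on the observation that $F(X,n)$ is the complement inside $F(X,\Gamma)$ of a union of ``partial diagonals'' of real codimension $\dim X \geq 2$.

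First, I would identify the relevant geometry. Because $X$ is a connected topological manifold of dimension $d\geq 2$, the space $X^{\times n}$ is a topological manifold of dimension $nd$, and for each pair $\{j,k\}$ with $j\neq k$ the ``diagonal'' $\Delta_{jk}=\{(x_1,\dots,x_n)\in X^{\times n}\mid x_j=x_k\}$ is a closed, locally flat submanifold of codimension $d$. The space $F(X,\Gamma)$ is the open submanifold obtained by deleting $\Delta_{jk}$ for each edge $\overline{jk}\in E(\Gamma)$, and $F(X,n)$ is obtained from $F(X,\Gamma)$ by further removing the set
\[
D:=\bigcup_{\overline{jk}\notin E(\Gamma),\ j\neq k}\bigl(\Delta_{jk}\cap F(X,\Gamma)\bigr),
\]
which is a closed subset of $F(X,\Gamma)$ that is locally a finite union of codimension-$d$ submanifolds.

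Next, given a loop $\alpha:(I,\partial I)\to(F(X,\Gamma),{\bf x})$, I would homotope it rel $\partial I$ to a loop in $F(X,n)$. Note the base point ${\bf x}$ already lies in $F(X,n)$, since its coordinates are pairwise distinct. Cover the compact image $\alpha(I)$ by finitely many Euclidean charts of $F(X,\Gamma)$ in which each local piece of $D$ is identified with a finite union of codimension-$\geq 2$ affine subspaces of $\R^{nd}$. Using a partition-of-unity / gluing argument, produce a small perturbation $\alpha'$ of $\alpha$ (fixing $\partial I$ and staying inside $F(X,\Gamma)$) such that in each chart $\alpha'$ misses these codimension-$\geq 2$ pieces; this is possible because a generic 1-parameter family in $\R^{nd}$ avoids a codimension-$\geq 2$ stratum. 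Because the perturbation is arbitrarily small and $F(X,\Gamma)$ is open in $X^{\times n}$, the straight-line homotopy in each chart (or, in the purely topological setting, a local contractibility homotopy) gives a homotopy rel $\partial I$ from $\alpha$ to $\alpha'$ in $F(X,\Gamma)$. Thus $[\alpha]=[\alpha']\in\pi_1(F(X,\Gamma),{\bf x})$ lies in the image of $i_*$, proving surjectivity.

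The main obstacle is the transversality step in the topological (rather than smooth) category: we are only told that $X$ is a topological manifold, so we cannot directly invoke Sard's theorem or smooth transversality. I would handle this by working chart-by-chart, replacing smooth perturbation with an elementary linear/piecewise-linear perturbation of the coordinate representatives of $\alpha$, which is enough because the loci to be avoided are finite unions of codimension-$\geq 2$ affine subspaces in $\R^{nd}$ whose complement has trivial $\pi_0$ and $\pi_1$ (in the appropriate relative sense). If desired, one could alternatively first piecewise-linearly approximate $\alpha$ and then apply general position for PL maps; either route yields the required perturbation.
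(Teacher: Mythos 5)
Your proposal is correct and takes essentially the same approach as the paper: the paper's proof likewise homotopes the loop $\alpha$ rel $\partial I$ by a small deformation inside $F(X,\Gamma)$ so that non-adjacent coordinate strings no longer intersect, relying on $\dim X\geq 2$, and delegates the details to the proof of Lemma 2.7 of \cite{LLL}. Your chart-by-chart general-position (PL) perturbation of the codimension-$\geq 2$ partial diagonals simply writes out explicitly the avoidance step that the paper handles by citation.
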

\begin{proof}
Take a path class $[\alpha] \in \pi_1( F(X,\Gamma), {\bf x})$. For any $s\in I$, if $\alpha(s)\in F(X,n)$, then $[\alpha]$ is also an element of $\pi_1(F(X,n), {\bf x})$.
Otherwise, there may be some $\alpha_j$ and $\alpha_k$ as paths from $I$ to $X$  with intersection points, where $\alpha_j$ and $\alpha_k$ are two coordinates in $\alpha=(\alpha_1, ..., \alpha_n)$. Using the same idea as  in the proof of \cite[Lemma 2.7]{LLL}, we can do a slight homotopy deformation on $\alpha$ without touching endpoints to produce a new path $\alpha'$ such that all coordinates of $\alpha'$ do not intersect each other, implying that $[\alpha'] \in \pi_1(F(X,n), {\bf x})$. This deformation makes sure that
$\alpha\simeq\alpha'$ rel $\partial I$ in $F(X,\Gamma)$. Thus $i_*([\alpha'])=[\alpha']=[\alpha]$.
\end{proof}

\begin{remark}
In the proof of Lemma~\ref{epimorphism}, when  doing various different slight homotopy deformations on the path $\alpha$ in $F(X, \Gamma)$, we can obtain many new paths. Generally these new paths may not be the same up to homotopy in $F(X, n)$,  but they are always the same up to homotopy in $F(X, \Gamma)$.
\end{remark}

Since  $\Aut(\Gamma)$  can  freely act on $F(X,n)$, it is not difficult to see that the inclusion $i: F(X, n)\hookrightarrow F(X, \Gamma)$ becomes an $\Aut(\Gamma)$-map, and it also induces the homomorphism
$$i_*: \pi_1^E(F(X,n), {\bf x}, \Aut(\Gamma)({\bf x}))\rightarrow \pi_1^E( F(X,\Gamma), {\bf x}, \Aut(\Gamma)({\bf x})).$$
The restriction map of this homomorphism to  $\pi_1(F(X,n), {\bf x})$ is just the epimorphism in Lemma~\ref{epimorphism}. Actually this homomorphism without any restriction is still an epimorphism.
\begin{lemma}\label{epimorphism1}
The homomorphism  $i_*: \pi_1^E(F(X,n), {\bf x}, \Aut(\Gamma)({\bf x}))\rightarrow \pi_1^E( F(X,\Gamma), {\bf x}, \Aut(\Gamma)({\bf x}))$ is surjective.
\end{lemma}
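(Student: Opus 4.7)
The plan is to reduce Lemma~\ref{epimorphism1} to an almost verbatim adaptation of Lemma~\ref{epimorphism}. The only new ingredient is checking that the deformation can be arranged so as to preserve both endpoints ${\bf x}$ and ${\bf x}_\sigma$ of an arbitrary path, not just the base point ${\bf x}$.

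First I would take a class $[\alpha]\in \pi_1^E(F(X,\Gamma), {\bf x}, \Aut(\Gamma)({\bf x}))$ and let $\sigma\in\Aut(\Gamma)$ be the unique element with $\alpha(1)={\bf x}_\sigma$. The crucial initial observation is that ${\bf x}_\sigma$ already lies in $F(X,n)$: this is because ${\bf x}$ was chosen to be of free orbit type, hence its coordinates are pairwise distinct in $X$, and permuting by $\sigma$ preserves this property. So both endpoints of $\alpha$ belong to the subspace $F(X,n)\subset F(X,\Gamma)$, which puts us in exactly the endpoint-configuration needed to invoke the same perturbation argument as in Lemma~\ref{epimorphism}.

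Next I would run the deformation step. For each pair $\overline{jk}\notin E(\Gamma)$ along which the coordinates $\alpha_j$ and $\alpha_k$ may collide, I would apply the standard small-perturbation argument for manifolds of dimension $\geq 2$ (the one cited from \cite[Lemma 2.7]{LLL}) to produce a new path $\alpha':I\to F(X,n)$ with all coordinates pairwise disjoint, while leaving $\alpha'(0)={\bf x}$ and $\alpha'(1)={\bf x}_\sigma$ fixed (since these endpoints already satisfy the stronger distinctness condition and so no perturbation is needed near $s=0,1$). The homotopy connecting $\alpha$ to $\alpha'$ only needs to remain inside $F(X,\Gamma)$, and strengthening ``all coordinates distinct for edges of $\Gamma$'' to ``all coordinates distinct'' is a move that happens inside $F(X,\Gamma)$, so $\alpha\simeq\alpha'$ rel $\partial I$ in $F(X,\Gamma)$.

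Finally, since $\alpha'$ has its image in $F(X,n)$ and its endpoints are ${\bf x}$ and ${\bf x}_\sigma\in\Aut(\Gamma)({\bf x})$, we get $[\alpha']\in\pi_1^E(F(X,n), {\bf x}, \Aut(\Gamma)({\bf x}))$ and $i_*([\alpha'])=[\alpha]$, proving surjectivity. The main (minor) obstacle is simply verifying that the endpoint-fixing condition in the perturbation argument still applies when the terminal point is ${\bf x}_\sigma$ rather than ${\bf x}$; this is handled by the free-orbit-type hypothesis on ${\bf x}$, which automatically places ${\bf x}_\sigma$ inside $F(X,n)$.
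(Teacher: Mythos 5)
Your proposal is correct and is precisely the argument the paper intends: its proof of Lemma~\ref{epimorphism1} consists only of the remark that it is ``a similar argument to one of Lemma~\ref{epimorphism},'' and you have spelled out exactly that adaptation, including the one genuinely new point --- that the terminal endpoint ${\bf x}_\sigma$ already lies in $F(X,n)$ because ${\bf x}$ has pairwise distinct coordinates, so the perturbation can be taken rel $\partial I$. Nothing further is needed.
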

\begin{proof}
The proof is  a similar argument to one of Lemma~\ref{epimorphism}.
\end{proof}

Together with Theorems~\ref{homotopy description}--\ref{sequence} and Lemmas~\ref{epimorphism}--\ref{epimorphism1}, we have

\begin{proposition}\label{extended exact}
There is an epimorphism between two short exact sequences.
\[
\begin{CD}
1@>>> \pi_1(F(X,n), {\bf x}) @>>> \pi_1^E(F(X,n), {\bf x}, \Aut(\Gamma)({\bf x})) @>>> \Aut(\Gamma) @>>> 1\\
@. @Vi_* VV   @V i_* VV  @ V= VV  \\
1@ >>> \pi_1( F(X,\Gamma), {\bf x}) @ > >> \pi_1^E( F(X,\Gamma), {\bf x}, \Aut(\Gamma)({\bf x})) @>>> \Aut(\Gamma) @>>> 1\\
\end{CD}
\]
or in terms of braid groups
\[
\begin{CD}
1@>>> P(X,n) @>>> B(X,n)|_{\Aut(\Gamma)} @>>> \Aut(\Gamma) @>>> 1\\
@. @V VV   @V VV  @ V= VV  \\
1@ >>> P(X,\Gamma) @ > >> B(X,\Gamma) @>>> \Aut(\Gamma) @>>> 1.\\
\end{CD}
\]
\end{proposition}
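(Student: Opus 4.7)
The plan is to assemble the diagram from the pieces already established and verify commutativity. For the top row, I would invoke the short exact sequence (2.1) cited from \cite{LLL} with $G=\Aut(\Gamma)\subset\Sigma_n$; this is legal because $\Aut(\Gamma)$ acts freely on $F(X,n)$ (a free subaction of $\Sigma_n$), which is precisely the hypothesis needed to apply that sequence. The bottom row is Theorem \ref{sequence}. The right-hand vertical map is taken to be the identity on $\Aut(\Gamma)$. The middle vertical map is the homomorphism $i_*$ induced by the inclusion $i:F(X,n)\hookrightarrow F(X,\Gamma)$, which is well-defined on extended fundamental groups because $i$ is $\Aut(\Gamma)$-equivariant and preserves the base point $\mathbf{x}$. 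The left vertical map is the restriction of this $i_*$ to the ordinary fundamental group, which coincides with the usual induced map.

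Next, I would verify commutativity of the two squares. The left square commutes by construction: both routes send a class $[\alpha]\in\pi_1(F(X,n),\mathbf{x})$ to the class of the same loop viewed in $\pi_1^E(F(X,\Gamma),\mathbf{x},\Aut(\Gamma)(\mathbf{x}))$. For the right square, take $[\alpha]\in\pi_1^E(F(X,n),\mathbf{x},\Aut(\Gamma)(\mathbf{x}))$ with $\alpha(1)=\mathbf{x}_\sigma$. Going up-then-right sends $[\alpha]\mapsto\sigma\mapsto\sigma$, while going right-then-down sends $[\alpha]\mapsto[i\circ\alpha]\mapsto\sigma$, since $(i\circ\alpha)(1)=\mathbf{x}_\sigma$ remains in $\Aut(\Gamma)(\mathbf{x})$. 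The two squares therefore commute, and exactness on each row is already in hand.

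For the epimorphism claim, surjectivity of the left vertical is Lemma \ref{epimorphism}, surjectivity of the middle is Lemma \ref{epimorphism1}, and surjectivity of the right is trivial. To pass to the braid-group version, I would apply the natural isomorphisms of Theorem \ref{homotopy description} (both the pure and the full version) to each row. Under these identifications $[\alpha]\leftrightarrow[c(\alpha)]$, the induced map $i_*$ corresponds to the geometric map that sends a crossing-changeable braid in $X\times I$ built from a path in $F(X,n)$ to the same braid regarded as built from a path in $F(X,\Gamma)$; naturality holds because $c(\alpha)$ is constructed coordinatewise and $i$ is the inclusion on coordinates.

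The only step requiring genuine care is the naturality of the isomorphism $\varphi$ of Theorem \ref{homotopy description} with respect to $i$, i.e., checking that $c(i\circ\alpha)=i(c(\alpha))$ after the appropriate identifications, so that the braid-group diagram really is the image of the homotopy diagram. This is not so much an obstacle as a bookkeeping verification; everything else is formal diagram-chasing once Lemmas \ref{epimorphism} and \ref{epimorphism1} and Theorem \ref{sequence} are available.
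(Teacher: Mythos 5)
Your proposal is correct and takes essentially the same route as the paper, which states the proposition as a direct consequence of the short exact sequence (2.1) from \cite{LLL} applied with $G=\Aut(\Gamma)$, Theorems \ref{homotopy description}--\ref{sequence}, and Lemmas \ref{epimorphism}--\ref{epimorphism1}, without writing out further details. Your explicit checks of the commutativity of the two squares and of the naturality $c(i\circ\alpha)=c(\alpha)$ under the identifications of Theorem \ref{homotopy description} merely fill in the routine verifications the paper leaves implicit.
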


\section{Calculations of $B(\mathbb{C}, \Gamma)$ and $P(\mathbb{C}, \Gamma)$}

In this section we will focus on the case $X=\mathbb{C}$. We will discuss the structure of $B(\mathbb{C},\Gamma)$ and $P(\mathbb{C},\Gamma)$, we abbreviate them to $B(\Gamma)$ and $P(\Gamma)$.
When $\Gamma$ is the complete graph $K_n$ on vertex set $[n]$, $B(\Gamma)$ and $P(\Gamma)$ just are the classical braid group $B_n$ and pure braid group  $P_n$, respectively.
It is well-known that Artin~\cite{A2} and Markoff~\cite{M} gave the presentations of $B_n$ and $P_n$  respectively, which are stated as follows.
\begin{theorem}[Artin]\label{artin}
The braid group $B_n$ is generated by $\sigma_i, i=1, ..., n-1$ with the relations
 \begin{equation*}
 \begin{cases}
 \sigma_i\sigma_j=\sigma_j\sigma_i \quad\text{for}\quad |i-j|>1\\
 \sigma_i\sigma_{i+1}\sigma_i=\sigma_{i+1}\sigma_i\sigma_{i+1}
\end{cases}
\end{equation*}
where $\sigma_i$ is the  braid with only one crossing, as shown below
\begin{center}
\begin{tikzpicture}
\draw [line width=0.04cm] (0,0)--(7,0);
\draw [line width=0.04cm] (0,-2)--(7,-2);

\draw (0.5,0)--(0.5,-2);
\node[above] at (0.5,0) {$1$};

\draw[dotted] (1.2,-1)--(1.6,-1);

\draw (2.3,0)--(2.3,-2);
\node[above] at (2,0) {$i-1$};

\draw (3,0)--(4,-2);
\node[above] at (3,0) {$i$};

\draw (4,0)--(3.6,-0.8);
\draw (3.4,-1.2)--(3,-2);
\node[above] at (4,0) {$i+1$};

\draw (4.7,0)--(4.7,-2);
\node[above] at (5.4,0) {$i+2$};

\draw (6.5,0)--(6.5,-2);
\node[above] at (6.5,0) {$n$};

\draw[dotted] (5.4,-1)--(5.8,-1);

\node[below] at (3.5,-2) {$\sigma_i$};
\end{tikzpicture}
\end{center}
\end{theorem}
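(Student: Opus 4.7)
The plan is to prove Artin's classical presentation in three stages, viewing $B_n$ as $\pi_1(F(\mathbb{C},n)/\Sigma_n, \widetilde{\bf x})$, i.e. as the group of equivalence classes of geometric braids with $n$ strings in $\mathbb{C}\times I$. First I would show that the $\sigma_i$'s generate. Given any braid, a small isotopy rel endpoints puts it in general position so that at most one transverse crossing of two strings occurs at each level $t\in I$. Reading off these crossings sequentially from $t=0$ to $t=1$ and labeling strings by their left-to-right position just above each crossing yields a word in the $\sigma_i^{\pm 1}$ representing the braid.

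Second, I would verify the two relations directly by isotopy. The commutation relation $\sigma_i\sigma_j=\sigma_j\sigma_i$ for $|i-j|>1$ holds because the crossings involve disjoint pairs of strings and their heights can be interchanged by a small vertical isotopy. The braid relation $\sigma_i\sigma_{i+1}\sigma_i=\sigma_{i+1}\sigma_i\sigma_{i+1}$ is a Reidemeister-III-type move on three adjacent strings, verified by drawing the explicit isotopy. These two steps give a surjective homomorphism $\varphi$ from the abstractly presented group $G_n$ onto $B_n$.

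The hard part is to show that these are the only relations, i.e. that $\varphi$ is injective. I would combine two classical tools. The Fadell--Neuwirth fibration $F(\mathbb{C},n)\longrightarrow F(\mathbb{C},n-1)$ given by forgetting the last coordinate has fiber $\mathbb{C}\setminus\{n-1\text{ points}\}$, whose fundamental group is free on $n-1$ generators; the resulting long exact sequence splits and inductively presents the pure braid group $P_n$ as an iterated semidirect product, yielding explicit generators and relations for $P_n$. Combined with the short exact sequence $1\to P_n\to B_n\to \Sigma_n\to 1$ (the special case $\Gamma=K_n$ of Theorem \ref{sequence} of this paper), one obtains a presentation of $B_n$ by adjoining coset representatives for $\Sigma_n$; Tietze transformations then reduce this to Artin's generators and relations.

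The main obstacle will be the last step: checking that every relation produced by the Schreier/Reidemeister rewriting is a consequence of only the commutation and braid relations, which is technically laborious. A more elegant alternative, which I would in fact pursue, is to invoke Artin's faithful representation $B_n\hookrightarrow \Aut(F_n)$, where $F_n$ is the free group on generators $x_1,\dots,x_n$ and $\sigma_i$ acts by $x_i\mapsto x_ix_{i+1}x_i^{-1}$, $x_{i+1}\mapsto x_i$, fixing $x_j$ for $j\ne i,i+1$. A direct check shows the Artin relations hold in $\Aut(F_n)$, producing a homomorphism $G_n\to \Aut(F_n)$ that factors through $\varphi$; faithfulness of the Artin representation (a theorem of Magnus) then forces $\varphi$ to be injective, completing the proof.
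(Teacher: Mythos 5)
The paper offers no internal proof of this statement: it is quoted as Artin's classical theorem with a citation to \cite{A2}, so there is nothing in the paper to compare your argument against, and your proposal must stand on its own. Your first two steps are sound and standard: a general-position isotopy gives generation by the $\sigma_i^{\pm 1}$, and the two relations are verified by explicit isotopies, yielding a surjection $\varphi\colon G_n\twoheadrightarrow B_n$ from the abstractly presented group. Your first proposed route to injectivity (Fadell--Neuwirth fibrations presenting $P_n$ as an iterated semidirect product of free groups, then Schreier--Reidemeister rewriting along $1\to P_n\to B_n\to\Sigma_n\to 1$ and Tietze reduction) is the correct classical strategy, essentially Fox--Neuwirth/Birman, but you explicitly leave the laborious verification unexecuted.

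The ``elegant alternative'' you elect to pursue instead contains a genuine logical gap. Faithfulness of the Artin representation $\rho\colon B_n\hookrightarrow \Aut(F_n)$ is a statement about the \emph{geometric} braid group: it says $\ker\rho=1$, which says nothing about $\ker\varphi$. In the factorization $\widetilde{\rho}=\rho\circ\varphi\colon G_n\to \Aut(F_n)$, injectivity of $\varphi$ would follow from injectivity of $\widetilde{\rho}$, i.e.\ from faithfulness of the Artin action of the \emph{presented} group $G_n$ --- not from faithfulness of $\rho$: if $\varphi$ had a nontrivial kernel, its elements would simply map to the identity automorphism, perfectly consistently with $\rho$ being injective. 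The statement you actually need, that $G_n\hookrightarrow\Aut(F_n)$, is in the standard literature obtained either by way of the presentation theorem itself (through the identification of $B_n$ with the mapping class group of the punctured disc --- circular in your setting) or by Artin's original combing/normal-form argument, which is precisely the laborious combinatorial work your shortcut was designed to avoid. A genuine repair exists --- for instance Krammer's purely algebraic proof that a linear representation of the \emph{presented} group is faithful, combined with a check that it factors through the geometric braid group --- but that is a much deeper input than the theorem of Magnus you cite. As written, the injectivity step fails, so the proof is incomplete unless you fall back on your first route and actually carry out the rewriting.
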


Let  $s_{i,j}$, $1\leq i < j \leq n$ be the elements  of $B_n$ given  by the formulae
\begin{equation}\label{s}
s_{i,j}=\sigma_{j-1}\dots\sigma_{i+1}\sigma_i^2\sigma_{i+1}^{-1}\dots\sigma_{j-1}^{-1}
\end{equation}
as shown below
\begin{center}
\begin{tikzpicture}
\draw [line width=0.04cm](0,0)--(8,0);
\draw [line width=0.04cm](0,-2)--(8,-2);
\node[below] at (4,-2){$s_{i,j}$};
\node[below] at (4, -2.5)  {\text{Figure A}};

\draw (0.5,0)--(0.5,-2);
\node[above] at (0.5,0){$1$};

\draw[dotted] (1,-1)--(1.3,-1);
\draw (2,0)--(2,-1.0);
\draw (2,-1.2)--(2,-2);
\node[above] at (2,0){$i$};

\draw (3,0)--(3,-2);
\node[above] at (3,0){$i+1$};

\draw[dotted] (3.8,-1)--(4.2,-1);
\draw (5,0)--(5,-2);
\node[above] at (5,0){$j-1$};

\draw (6,0)--(5.1,-0.2);
\draw (4.9,-0.244)--(4.5,-0.333);
\draw (3.5,-0.556)--(3.1,-0.644);
\draw (2.9,-0.689)--(2.1,-0.867);
\draw (1.9,-0.911)--(1.5,-1);

\draw (1.5,-1)--(2.9,-1.311);
\draw (6,-2)--(5.1,-1.8);
\draw (4.9,-1.756)--(4.5,-1.667);
\draw (3.5,-1.444)--(3.1,-1.356);
\node[above] at (6,0){$j$};

\draw[dotted] (6.6,-1)--(6.9,-1);

\draw(7.5,0)--(7.5,-2);
\node[above] at (7.5,0){$n$};
\end{tikzpicture}
\end{center}

\begin{theorem}[Markoff]\label{markoff}
The elements $s_{i,j}$, $1\leq i < j \leq n$ with the Burau relations:
\begin{enumerate}
\item[$(1)$] $s_{i,j}s_{k,l}=s_{k,l}s_{i,j}$\hskip4.5cm for $i<j<k<l$ and for $i<k<l<j$,
\item[$(2)$] $s_{i,j}s_{i,k}s_{j,k}=s_{i,k}s_{j,k}s_{i,j}=s_{j,k}s_{i,j}s_{i,k}$\hskip1.1cm for $i<j<k$,
\item[$(3)$] $s_{i,k}s_{j,k}s_{j,l}s_{j,k}^{-1}=s_{j,k}s_{j,l}s_{j,k}^{-1}s_{i,k}$ \hskip2.1cm for $i<j<k<l$.
\end{enumerate}
 give a presentation of pure braid group $P_n$.
\end{theorem}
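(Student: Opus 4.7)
The plan is to proceed by induction on $n$, using the Fadell--Neuwirth fibration. Forgetting the last coordinate gives a locally trivial fibration
$$p : F(\mathbb{C}, n) \longrightarrow F(\mathbb{C}, n-1)$$
whose fiber is $\mathbb{C}$ minus $n-1$ points, a space whose fundamental group is free of rank $n-1$. Because $\mathbb{C}$ is an open surface, this fibration admits a continuous cross-section, so the induced long exact sequence of homotopy groups breaks into a split short exact sequence
$$1 \longrightarrow \pi_1(\mathbb{C}\setminus\{x_1,\dots,x_{n-1}\}) \longrightarrow P_n \longrightarrow P_{n-1} \longrightarrow 1,$$
exhibiting $P_n$ as a semidirect product of a free group of rank $n-1$ by $P_{n-1}$. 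The base case $n=2$ is immediate, since $F(\mathbb{C},2)$ is homotopy equivalent to $S^1$ and $P_2 = \mathbb{Z}\cdot s_{1,2}$.

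Next I would identify the generators inside this extension. Directly from Figure~A, the braids $s_{1,n}, \dots, s_{n-1,n}$ represent the standard loops in the punctured plane, each encircling precisely one puncture, and therefore form a free basis of the fiber group $\pi_1(\mathbb{C}\setminus\{x_1,\dots,x_{n-1}\})$. For $i<j<n$, the element $s_{i,j}$ maps under $p_*$ to the corresponding element $s_{i,j}$ of $P_{n-1}$, which by the induction hypothesis satisfies the Burau presentation. Combining these two families gives a generating system for $P_n$, and by the split extension structure no further generators are needed.

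The main obstacle, and the geometric heart of the argument, is computing the conjugation action of $P_{n-1}$ on the free fiber and matching it with the Burau relations (1)--(3) in which one index equals $n$. For each $i<j<n$ and $k<n$, one draws the braid representing $s_{i,j}\,s_{k,n}\,s_{i,j}^{-1}$, isotopes it carefully, and reads off the resulting loop as a word in $\{s_{l,n}\}$; a case analysis on the position of $k$ relative to $(i,j)$ produces exactly the conjugation formulas corresponding to relations (1), (2), (3) when one index is $n$. Finally, invoking the standard presentation of a split extension
$$A \rtimes B = \langle A,\,B \mid R_A,\ R_B,\ b\,a\,b^{-1} = \phi_b(a)\rangle,$$
applied with $A$ the free fiber group (so $R_A$ is empty), $B = P_{n-1}$ carrying the inductively known Burau relations, and $\phi$ given by the conjugation formulas just computed, yields precisely the Markoff presentation for $P_n$ and closes the induction.
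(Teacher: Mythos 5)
You should first note that the paper contains no proof of this statement to compare against: Theorem~\ref{markoff} is quoted as a classical result, attributed to Markoff with the citation \cite{M}, and is consumed as a black box in deriving the presentation of $P(\mathbb{C},\Gamma)$ and of $B(C_n)$. Your sketch must therefore stand on its own, and in outline it does: it is the standard modern proof via the Fadell--Neuwirth fibration. The fibration $F(\mathbb{C},n)\to F(\mathbb{C},n-1)$ does admit a continuous cross-section (for instance $x_n=1+\max_i|x_i|$), and the splitting alone already forces the connecting map $\pi_2(\text{base})\to\pi_1(\text{fiber})$ to vanish, so you get injectivity of the fiber group in the short exact sequence without even invoking asphericity of configuration spaces. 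The identification of $s_{1,n},\dots,s_{n-1,n}$ (in the notation of Figure~A) with a free basis of $\pi_1(\mathbb{C}\setminus\{x_1,\dots,x_{n-1}\},x_n)$ is likewise correct, and the base case $n=2$ is fine.

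The one place where your sketch under-reports the real work is the final step. The presentation of a split extension requires relations \emph{exactly} of the form $b\,a\,b^{-1}=\phi_b(a)$ for generators $a$ of the fiber and $b$ of the base, whereas the Burau relations with top index $n$ are not literally of this shape: relation $(2)$ with $k=n$ intertwines $s_{i,j}$ with both fiber generators $s_{i,n}$ and $s_{j,n}$ at once, and relation $(3)$ with $l=n$ says that $s_{i,k}$ centralizes the conjugate $s_{j,k}s_{j,n}s_{j,k}^{-1}$, again not a conjugation formula for a single generator. So after you ``draw the braid and read off the loop'' to compute the action $\phi$, you still owe a Tietze-transformation argument in both directions: modulo the inductively known relations among indices $<n$, the normal closure of relations $(1)$--$(3)$ with one index equal to $n$ must be shown to coincide with the normal closure of your conjugation formulas (otherwise you would have proved a presentation, but not \emph{this} presentation --- with too few or too many relations). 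This is finite, routine word manipulation, but it is precisely the combinatorial content of Markoff's theorem rather than a formality, and your phrase ``produces exactly the conjugation formulas corresponding to relations $(1)$--$(3)$'' asserts rather than performs it. With that verification written out, your argument closes; as submitted, it is a correct and well-chosen skeleton with its central computation left implicit.
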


\begin{remark}Intuitively we  see that each generator in $B_n$ and $P_n$ corresponds to an edge in $K_n$. In particular, all generators of $P_n$ bijectively correspond to all edges of $K_n$.
\end{remark}

\subsection{The presentation of $P(\Gamma)$} Let $\Gamma$ be a simple graph without loops on vertex set $[n]$. Now  let us discuss the the presentation of $P(\Gamma)$, which is the generalization of $P_n$.
 \vskip .2cm
   Define $s_{ij}$ to be the braid in $P(\Gamma)$ with  the $i$-th string and the $j$-th string tangled one time,  as shown in Figure A.
Obviously, if $\overline{ij}\not\in E(\Gamma)$, then $s_{ij}$ will be trivial. Let $\Delta_{ijk}$ denote the 3-circuit in $K_n$, which is formed by three edges $\overline{ij}, \overline{ik}, \overline{jk}$ where $i,j,k\in [n]$. Let
$\square_{ijkl}$ denote the 4-circuit in $K_n$, which is formed by four edges $\overline{ij}, \overline{jk}, \overline{kl}, \overline{il}$ where $i,j,k, l\in [n]$.

\begin{theorem}\label{pure braid group}
 The elements $s_{i,j}$, $\overline{ij}\in E(\Gamma)$ with the following relations
\begin{enumerate}
\item[$(1)$]  if $\overline{ij}, \overline{kl}\in E(\square_{ijkl})\cap E(\Gamma)$ with $i<j<k<l$ or $i<k<l<j$, then
$$s_{i,j}s_{k,l}=s_{k,l}s_{i,j};$$
\item[$(2_1)$] if $\Delta_{ijk}$ with $i<j<k$ is a 3-circuit in $\Gamma$, then $$s_{i,j}s_{i,k}s_{j,k}=s_{i,k}s_{j,k}s_{i,j}=s_{j,k}s_{i,j}s_{i,k};$$
\item[$(2_2)$]  if $\Delta_{ijk}$ is not a 3-circuit in $\Gamma$ but $\overline{ij}, \overline{jk}\in E(\Gamma)$ (so  $\overline{ik}\notin E(\Gamma)$), then
$$s_{i,j}s_{j,k}=s_{j,k}s_{i,j}$$
where the restriction condition $i<j<k$ is not necessarily satisfied;
\item[$(3_1)$] if $\overline{ik}, \overline{jl}\in E(\square_{ijkl})\cap E(\Gamma)$ with  $i<j<k<l$ and $\Delta_{jkl}$ is a 3-circuit in $\Gamma$, then
$$s_{i,k}s_{j,k}s_{j,l}s_{j,k}^{-1}=s_{j,k}s_{j,l}s_{j,k}^{-1}s_{i,k};$$
\vskip.2cm
\item[$(3_2)$]  if $\overline{ik}, \overline{jl}\in E(\square_{ijkl})\cap E(\Gamma)$ with  $i<j<k<l$ but $\Delta_{jkl}$ is not a 3-circuit in $\Gamma$, then
    $$s_{i,k}s_{j,l}=s_{j,l}s_{i,k}$$
\end{enumerate}
give a presentation of the pure crossing-changable braid group $P(\Gamma)$.
\end{theorem}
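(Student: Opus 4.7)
The plan is to deduce the presentation from Markoff's presentation of $P_n$ (Theorem~\ref{markoff}) together with the surjection $i_\ast:P_n\twoheadrightarrow P(\Gamma)$ of Lemma~\ref{epimorphism}, read via the isomorphism $P(\Gamma)\cong\pi_1(F(\mathbb{C},\Gamma),\mathbf{x})$ of Theorem~\ref{homotopy description}(2). Geometrically the generator $s_{i,j}\in P_n$ defined by \eqref{s} is a meridian around the complex hyperplane $H_{ij}=\{x_i=x_j\}\subset\mathbb{C}^n$, and $F(\mathbb{C},\Gamma)=\mathbb{C}^n\setminus\bigcup_{\overline{ij}\in E(\Gamma)}H_{ij}$ contains $F(\mathbb{C},n)$ as the complement of the full braid arrangement. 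Since each hyperplane $H_{ij}$ with $\overline{ij}\notin E(\Gamma)$ is put back in passing from $F(\mathbb{C},n)$ to $F(\mathbb{C},\Gamma)$, the corresponding meridian $s_{i,j}$ bounds a small transverse disk and hence dies in $P(\Gamma)$.

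The core step is to identify $\ker(i_\ast)$ precisely as the normal closure $N=\langle\!\langle s_{i,j}\mid\overline{ij}\notin E(\Gamma)\rangle\!\rangle\subset P_n$. One inclusion is immediate from the preceding observation. For the reverse I would use the transversality argument underlying the proof of Lemma~\ref{epimorphism}: given a null-homotopy $H:D^2\to F(\mathbb{C},\Gamma)$ of a loop $\alpha\subset F(\mathbb{C},n)$, perturb $H$ to be smooth and transverse to each complex codimension-one submanifold $H_{ij}\cap F(\mathbb{C},\Gamma)$ with $\overline{ij}\notin E(\Gamma)$, and read the resulting finite transverse intersection pattern as a word in conjugates of the meridians $s_{i,j}^{\pm 1}$; this word represents the class of $\alpha$ in $P_n$, proving $\ker(i_\ast)\subset N$. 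Equivalently, this is the standard fact about complements of sub-arrangements of complex hyperplanes applied to the graphic sub-arrangement $\mathcal{A}_\Gamma\subset\mathcal{A}_n$.

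Once $P(\Gamma)\cong P_n/N$ is established, the proof reduces to a Tietze computation: adjoin the relations $s_{i,j}=1$ for $\overline{ij}\notin E(\Gamma)$ and eliminate those generators from Markoff's presentation. A case analysis on each Markoff relation then produces the listed families. Markoff~$(1)$ survives exactly when both $\overline{ij},\overline{kl}\in E(\Gamma)$, giving $(1)$. For Markoff~$(2)$ on $\Delta_{ijk}$, having all three edges in $\Gamma$ yields $(2_1)$; exactly one missing edge collapses the triple equality to the single commutation $(2_2)$; two or more missing edges give a vacuous relation. For Markoff~$(3)$, the $3$-circuit case of $\Delta_{jkl}$ in $\Gamma$ gives $(3_1)$ directly, while $\overline{jk}\notin E(\Gamma)$ collapses it to the commutation in $(3_2)$.

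The hard part will be the residual subcase of $(3_2)$ in which $\overline{jk}\in E(\Gamma)$ but $\overline{kl}\notin E(\Gamma)$: here Markoff~$(3)$ alone does not yield $[s_{i,k},s_{j,l}]=1$. I would resolve this by first invoking $(2_2)$ on $\Delta_{jkl}$ (which fails to be a $3$-circuit precisely because $\overline{kl}\notin E(\Gamma)$) to obtain $[s_{j,k},s_{j,l}]=1$, so that $s_{j,k}s_{j,l}s_{j,k}^{-1}=s_{j,l}$; substituting into Markoff~$(3)$ then reduces it to exactly $s_{i,k}s_{j,l}=s_{j,l}s_{i,k}$. The remaining bookkeeping, verifying that every Markoff relation reduces under the substitutions $s_{i,j}=1$ ($\overline{ij}\notin E(\Gamma)$) to one of $(1),(2_1),(2_2),(3_1),(3_2)$ and that no further relations must be adjoined, is the technically most delicate part of the argument.
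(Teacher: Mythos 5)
Your proposal is correct, and it is in fact more complete than the paper's own proof. The paper's argument is exactly your skeleton, compressed to three sentences: it invokes the epimorphism $i_*\colon P_n\rightarrow P(\Gamma)$ from Proposition~\ref{extended exact} to obtain the generators, asserts that ``all possible relations among them can be obtained by the images of the epimorphism $i_*$ acting on the Burau relations,'' and concludes that ``the required relations follow easily from direct calculations.'' What the paper never justifies is precisely the step you isolate as the core of the argument: the identification $\ker(i_*)=\langle\!\langle\, s_{i,j}\mid \overline{ij}\notin E(\Gamma)\,\rangle\!\rangle$. Surjectivity alone does not let one read a presentation of a quotient off a presentation of the source, so the paper's proof as written has a real gap at this point. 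Your transversality argument --- perturb a null-homotopy in $F(\mathbb{C},\Gamma)$ of a loop in $F(\mathbb{C},n)$ so that it meets each reinstated hyperplane $H_{i,j}$, $\overline{ij}\notin E(\Gamma)$, transversally and away from the (real codimension four) pairwise intersections, then read the finite intersection pattern as a product of conjugates of meridians $s_{i,j}^{\pm1}$ --- supplies exactly the missing lemma; it is in substance the hyperplane-arrangement viewpoint that the paper relegates to the remark after the theorem, crediting Randell, rather than incorporating into the proof. Your treatment of the residual subcase of $(3_2)$ (when $\overline{jk}\in E(\Gamma)$ but $\overline{kl}\notin E(\Gamma)$, first derive $[s_{j,k},s_{j,l}]=1$ from $(2_2)$ applied to $\Delta_{jkl}$, then collapse Markoff~$(3)$ to $[s_{i,k},s_{j,l}]=1$) is also correct, and it is exactly the kind of detail buried in the paper's ``direct calculations'': without it one would only obtain the conjugated form of Markoff~$(3)$ rather than the clean commutation stated in $(3_2)$. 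In short: same route as the paper, but you make explicit, and correctly prove, the kernel identification and the Tietze bookkeeping that the paper's proof leaves implicit.
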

\begin{proof}
We know from Proposition~\ref{extended exact} that the induced map $i_*: P_n\longrightarrow P(\Gamma)$ by the inclusion
$i: F(\mathbb{C}, n)\hookrightarrow F(\mathbb{C}, \Gamma)$ is an epimorphism. This means that all elements $s_{i,j}$, $\overline{ij}\in E(\Gamma)$, form a system of generators in $P(\Gamma)$, and all possible relations among them can be obtained by
the images of the epimorphism $i_*$ acting on the Burau relations (1)--(3) in Theorem~\ref{markoff}.
Therefore, the required relations follow easily from direct calculations.
\end{proof}

\begin{remark}
Randell tells us in \cite{R, RR} that there is also another approach to get Theorem~\ref{pure  braid group}  from the viewpoint of complex hyperplane arrangements.
  Actually $F(\mathbb{C},\Gamma)$ can be regarded as a complement space of complex hyperplane arrangements as follows: Let $H_{i,j}:=\{(z_1,\dots,z_n)\in \R^{2n}| z_i=z_j \}$. Then  $F(\mathbb{C},\Gamma)=\mathbb{C}^n\setminus \underset{\overline{ij}\in E(\Gamma)}{\bigcup} H_{i,j}$. Randell's method is  more geometric, for more details, see~\cite{R, RR}). Here the method used is of the combinatorial nature.
\end{remark}

\begin{corollary}\label{free abelian}
If $\Gamma$ doesn't contain any 3-circuit, then $P(\Gamma)$ is a free abelian group generated by $s_{i,j}$, $\overline{ij}\in E(\Gamma)$.
\end{corollary}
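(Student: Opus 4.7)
My plan is to read Theorem~\ref{pure braid group} under the hypothesis that $\Gamma$ contains no $3$-circuit, and observe that every surviving relation is a commutator relation.

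First I would dispose of the triangle-based relations. Under the hypothesis, no three vertices $i,j,k$ span a triangle $\Delta_{ijk}$ in $\Gamma$, so relation $(2_1)$ and relation $(3_1)$ are vacuous: their hypotheses are never met. Hence only relations $(1)$, $(2_2)$, and $(3_2)$ survive in the presentation of $P(\Gamma)$.

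Next I would examine the surviving relations one by one and verify that each is of the form $[s_{a,b},s_{c,d}]=1$. Relation $(1)$ already asserts commutativity of $s_{i,j}$ and $s_{k,l}$. Relation $(2_2)$ applies to two edges $\overline{ij}, \overline{jk}\in E(\Gamma)$ sharing a vertex, subject to $\Delta_{ijk}$ not being a $3$-circuit in $\Gamma$; this side condition is automatic under our hypothesis, so any two adjacent edges of $\Gamma$ give commuting generators. Relation $(3_2)$ similarly forces $s_{i,k}s_{j,l}=s_{j,l}s_{i,k}$ for crossing pairs $\overline{ik},\overline{jl}\in E(\Gamma)$ whenever $\Delta_{jkl}$ is not a triangle of $\Gamma$, and again this side condition holds vacuously.

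Then I would check that the commutativity relations already cover every pair of distinct edges of $\Gamma$. Two edges $\overline{ij}, \overline{kl}\in E(\Gamma)$ either share a vertex (handled by $(2_2)$), or are vertex-disjoint. In the disjoint case, after reordering indices they fall into one of the three configurations $i<j<k<l$ (separated), $i<k<l<j$ (nested), or $i<k<j<l$ (crossing); the first two are handled by $(1)$, and the crossing case is precisely $(3_2)$. Thus every pair of generators commutes, and $P(\Gamma)$ is abelian.

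Finally, since Theorem~\ref{pure braid group} gives a complete presentation and all defining relators lie in the commutator subgroup of the free group on $\{s_{i,j}:\overline{ij}\in E(\Gamma)\}$, the group $P(\Gamma)$ is the abelianization of that free group, that is, the free abelian group of rank $|E(\Gamma)|$ on the given generators. The only non-routine point is the case analysis for crossing disjoint edges, where one must recognise that $(3_2)$ degenerates into a pure commutation relation precisely because no $\Delta_{jkl}$ can be a triangle in $\Gamma$; once this is seen, the result is immediate.
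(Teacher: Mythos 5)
Your proposal is correct and is precisely the argument the paper intends: the corollary is stated as an immediate consequence of Theorem~\ref{pure braid group}, and your verification---that the triangle hypotheses of $(2_1)$ and $(3_1)$ are vacuous, that the surviving relations $(1)$, $(2_2)$, $(3_2)$ are all commutators, and that the three configurations of a disjoint pair of edges (separated, nested, crossing) are covered by $(1)$ and $(3_2)$ while adjacent pairs are covered by $(2_2)$---is exactly the intended reading. The concluding step, that a group presented with all relators in the commutator subgroup and with all generators pairwise commuting is the free abelian group on those generators, is also correct.
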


\subsection{The presentation of B($\Gamma$)}

The group structure of  $B(\Gamma)$ heavily relies on $\Aut(\Gamma)$. However, as pointed out in~\cite{BW}, the group structure of $\Aut(\Gamma)$ is still an unsolved problem except for some particular classes of graphs.
Furthermore, we see from the short exact sequence
$$1\rightarrow P(\Gamma) \rightarrow B(\Gamma)\rightarrow \Aut(\Gamma)\rightarrow 1$$
(see Theorem~\ref{sequence}) that  the determination of $B(\Gamma)$ would be quite difficult in general although a presentation of $P(\Gamma)$ has been given in Theorem~\ref{pure  braid group}.
\vskip.2cm
For a short exact sequence of groups $$1\rightarrow  A \xrightarrow j E\xrightarrow{p} G\rightarrow 1,$$
the group extension theory \cite{PCG} tells us how
 to give a presentation of $E$ from the given  $A=\langle S\hskip.1cm|\hskip.1cm R  \rangle$ and $G=\langle T\hskip.1cm| \hskip.1cmQ \rangle$.  Choose a mapping (not necessarily a homomorphism) $\psi: G\rightarrow E$ with $p\circ \psi=\text{id}_G$. Clearly the choice of $\psi$ is not unique since $p$ is surjective. Then we use $\psi$ to give a conjugation action $\gamma$ of $G$ on $A$ defined by $ \gamma(g, a)=\psi(g)^{-1}a\psi(g)$ where $g\in G$ and $a\in A$. For each $t\in T$, $\gamma(t, \cdot)$ defines an automorphism $\gamma_t:A\rightarrow A$. Moreover,  $E$ is presented as follows
$$\langle S\cup \psi(T)\hskip.1cm|\hskip.1cm R \cup \{\psi(t)^{-1}s\psi(t)=\gamma_t(s): s\in S, t\in T \}\cup \{\omega_q(s)=  q(\psi(t); t\in T): q\in Q \} \rangle $$
where $w_q(s)$ is a word  in $S$ such that $j(\omega_q(s))=q(\psi(t))$ in $E$, and each $q\in Q$ corresponds to a word $q(t)$ equal to the identity $e_G$ in $G$ so  $p(q(\psi(t))=e_G$.
\vskip.2cm

\subsubsection{A presentation of $B(C_n)$}
With the above understanding, now we consider the case in which  $\Gamma$ is the cycle graph $C_n$ in $K_n$. Without loss of generality, assume that $C_n$ is the cycle graph formed by edges $\overline{12}, \overline{23}, ..., \overline{n (n-1)}$ and $\overline{1n}$.
 Then $\Aut(C_n)$ is the dihedral group $$\mathcal{D}_{2n}=\langle a,b| a^n=b^2=e, bab=a^{-1}  \rangle$$
 where $a$ denotes the cyclic permutation $(12\cdots n)$ and $b$ denotes the permutation
 $$b=
 (2\ n)(3\ n-1)\cdots (r(n) \  s(n))$$
 where $r(n)=\begin{cases}
{{n}\over 2}   & \text{if $n$ is even}\\
{{n+1}\over 2} & \text{if $n$ is odd}
\end{cases}
$
and
$s(n)=\begin{cases}
{{n+4}\over 2} & \text{if $n$ is even}\\
{{n+3}\over 2} & \text{if $n$ is odd}.
\end{cases}$

\vskip .2cm
By Corollary~\ref{free abelian}, $P(C_n)$ is a free abelian group generated by $s_{i,i+1} (1\leq i\leq n-1)$ and $s_{1,n}$, so $$P(C_n)\cong \underset{1\leq i\leq n-1}{\bigoplus}\Z(s_{i,i+1})\oplus \Z(s_{1,n}).$$
Next we are going to  give a presentation of $B(C_n)$ by making use of  the group extension theory from the short exact sequence
$$1\rightarrow P(C_n) \xrightarrow{j} B(C_n)\xrightarrow{p} \mathcal{D}_{2n}\rightarrow 1.$$

\vskip .2cm
By Proposition~\ref{extended exact}, we have the following commutative diagram with three epimorphisms
$$
\xymatrix{
            B_n|_{\Aut(C_n)}              \ar[dd]^{i_*} \ar[dr]  &  \\
   & \mathcal{D}_{2n} \\
              B(C_n)\ar[ur]^p & \\
             }
$$
Choose a map $\psi: \mathcal{D}_{2n}\longrightarrow B(C_n)$ with $p\circ \psi=\text{id}_{\mathcal{D}_{2n}}$, defined by
 $\psi(a)=i_*(a_{1,n})$ and
 $$\psi(b)=
  i_*(a_{2,n}a_{3,n}^{-1}a_{3,n-1}a_{4,n-1}^{-1}\cdots a_{r(n), s(n)}a^{-1}_{r(n)+1, s(n)})  $$
where $a_{i,j}$ denotes the word $\sigma_{j-1}\cdots \sigma_i$ in $B_n$ (for the meanings of $\sigma_i$'s, see Theorem~\ref{artin}); namely
\begin{equation}\label{a}
a_{i,j}=\sigma_{j-1}\cdots \sigma_i.
\end{equation}
 Geometrically,  $a_{k,l}a_{k+1,l}^{-1}$ expresses the braid of exchanging
 the $k$-th and $l$-th strings, which is abbreviated as $e_{k,l}$ for  convenience. Then
 $$\psi(b)=
  i_*(e_{2,n}e_{3,n-1}\cdots e_{r(n),s(n)}). $$
 \begin{remark}\label{i_*}
 Since $i_*: B_n|_{\Aut(C_n)}\longrightarrow B(C_n)$ is induced by the conclusion $i: F(\mathbb{C}, n)\hookrightarrow F(\mathbb{C}, C_n)$, it is easy to see that for any braid $\rho$ in $B_n|_{\Aut(C_n)}$,
 $i_*(\rho)$ can be understood as the braid of $\rho$ in $B(C_n)$, so we may write $i_*(\rho)=\rho$.
 \end{remark}
\vskip.2cm
First let us prove some useful equations in $B_n$.
\begin{lemma}\label{equations} In  $B_n$, the following equations hold:
\begin{enumerate}
\item[$(1^\circ)$] $\sigma_{i+1}\sigma_i\sigma_{i+1}^2=\sigma_i^2\sigma_{i+1}\sigma_i$;
\item[$(2^\circ)$] $a_{i,j}\sigma_k=\sigma_{k-1}a_{i,j}$ for $i<k<j$
\item[$(3^\circ)$] $a_{i,j}a_{k,l}=a_{k-1,l-1}a_{i,j}$ for $i<k<l\leq j$;
\item[$(4^\circ)$] $e_{2,n}， e_{3,n-1}, ...,  e_{r(n),s(n)}$ are pairwise commutative;
\item[$(5^\circ)$] $e_{k,n+2-k}a_{1,n}=a_{1,n}e_{k+1,n+3-k}$ for $2< k\leq [\frac{n+1}{2}]$.
\end{enumerate}
\end{lemma}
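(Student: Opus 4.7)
The strategy is to establish $(1^\circ)$--$(5^\circ)$ in order, since each later identity bootstraps off the earlier ones, with the real work concentrated in $(2^\circ)$ and $(4^\circ)$.

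For $(1^\circ)$ I would apply the braid relation $\sigma_i\sigma_{i+1}\sigma_i=\sigma_{i+1}\sigma_i\sigma_{i+1}$ twice, via $(\sigma_{i+1}\sigma_i\sigma_{i+1})\sigma_{i+1}=(\sigma_i\sigma_{i+1}\sigma_i)\sigma_{i+1}=\sigma_i(\sigma_{i+1}\sigma_i\sigma_{i+1})=\sigma_i^2\sigma_{i+1}\sigma_i$. For $(2^\circ)$ I would expand $a_{i,j}=\sigma_{j-1}\cdots\sigma_i$ and push the trailing $\sigma_k$ leftward through the word: the far-commutation rules $\sigma_p\sigma_q=\sigma_q\sigma_p$ (valid for $|p-q|\ge 2$) slide $\sigma_k$ past $\sigma_i,\ldots,\sigma_{k-2}$; the braid relation then converts the local block $\sigma_k\sigma_{k-1}\sigma_k$ into $\sigma_{k-1}\sigma_k\sigma_{k-1}$; and finally the newly-freed leftmost $\sigma_{k-1}$ commutes past $\sigma_{j-1},\ldots,\sigma_{k+1}$ (each at distance $\ge 2$ from $k-1$) to emerge on the far left. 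Then $(3^\circ)$ is a routine induction on the length of $a_{k,l}=\sigma_{l-1}\cdots\sigma_k$: every factor $\sigma_m$ satisfies $i<m<j$ (because $i<k\le m\le l-1<j$), so iterating $(2^\circ)$ over all factors replaces each $\sigma_m$ by $\sigma_{m-1}$ on the left of $a_{i,j}$, yielding $a_{k-1,l-1}a_{i,j}$.

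From $(3^\circ)$ I would next derive three inverse-variants by inversion and relabelling: $a_{i,j}a_{k,l}^{-1}=a_{k-1,l-1}^{-1}a_{i,j}$ (same range $i<k<l\le j$), $a_{i,j}^{-1}a_{k,l}=a_{k+1,l+1}a_{i,j}^{-1}$ for $i\le k<l<j$, and $a_{i,j}^{-1}a_{k,l}^{-1}=a_{k+1,l+1}^{-1}a_{i,j}^{-1}$ for $i\le k<l<j$; these will be the workhorse for both $(4^\circ)$ and $(5^\circ)$. Identity $(5^\circ)$ then drops out from $(3^\circ)$ and its first inverse-variant: the former gives $a_{1,n}a_{k+1,n+3-k}=a_{k,n+2-k}a_{1,n}$, the latter $a_{1,n}a_{k+2,n+3-k}^{-1}=a_{k+1,n+2-k}^{-1}a_{1,n}$, and concatenating these rewrites $a_{1,n}\cdot e_{k+1,n+3-k}$ as $e_{k,n+2-k}\cdot a_{1,n}$. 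In the degenerate case $k=(n+1)/2$ (odd $n$), both $a$-factors being pushed across collapse to the empty word and $(5^\circ)$ reduces trivially to an instance of $(2^\circ)$.

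The main obstacle is $(4^\circ)$, where no single application of $(3^\circ)$ suffices. Writing $l=n+2-k$ and $l'=n+2-k'$ for $k<k'\le r(n)$ so that $k<k'<l'<l$, I would expand $e_{k,l}e_{k',l'}=a_{k,l}a_{k+1,l}^{-1}a_{k',l'}a_{k'+1,l'}^{-1}$ and perform four successive commutations using $(3^\circ)$ and its three inverse-variants in alternation: first $a_{k+1,l}^{-1}a_{k',l'}\to a_{k'+1,l'+1}a_{k+1,l}^{-1}$, then $a_{k,l}a_{k'+1,l'+1}\to a_{k',l'}a_{k,l}$, then $a_{k+1,l}^{-1}a_{k'+1,l'}^{-1}\to a_{k'+2,l'+1}^{-1}a_{k+1,l}^{-1}$, and finally $a_{k,l}a_{k'+2,l'+1}^{-1}\to a_{k'+1,l'}^{-1}a_{k,l}$, arriving at the word $a_{k',l'}a_{k'+1,l'}^{-1}a_{k,l}a_{k+1,l}^{-1}=e_{k',l'}e_{k,l}$. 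The delicate part is the bookkeeping: at every step one has to verify that the strict/non-strict index hypotheses of the invoked variant of $(3^\circ)$ are actually met (notably $l'\ge k'+2$ in the last move), and then separately treat the degenerate case $l'=k'+1$---which arises only for odd $n$ at $k'=(n+1)/2$, where $e_{k',l'}$ collapses to $\sigma_{k'}$---by reducing it, via two applications of $(2^\circ)$ (one to $a_{k+1,l}^{-1}\sigma_{k'}$ and one to $a_{k,l}\sigma_{k'+1}$), to the direct identity $e_{k,l}\sigma_{k'}=\sigma_{k'}e_{k,l}$.
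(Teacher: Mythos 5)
Your proposal is correct and follows essentially the same route as the paper: $(1^\circ)$ and $(2^\circ)$ directly from the Artin relations, $(3^\circ)$ by iterating $(2^\circ)$ over the factors of $a_{k,l}$, and $(4^\circ)$, $(5^\circ)$ by the same four-step (resp.\ two-step) shuffle of $a$-factors and their inverses using $(3^\circ)$. In fact you are slightly more careful than the paper, which applies $(3^\circ)$ ``for arbitrary $k<l$'' without isolating the degenerate case $l'=k'+1$ (odd $n$, $k'=\frac{n+1}{2}$, where $e_{k',l'}$ collapses to $\sigma_{k'}$) that you correctly flag and handle via $(2^\circ)$.
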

\begin{proof} Making use of the relations (1)--(2) in Theorem~\ref{artin}, direct calculations give
$$\sigma_{i+1}\sigma_i\sigma_{i+1}^2=\sigma_{i+1}\sigma_i\sigma_{i+1}\sigma_{i+1}=\sigma_i\sigma_{i+1}\sigma_i\sigma_{i+1}=
\sigma_i\sigma_i\sigma_{i+1}\sigma_i=\sigma_i^2\sigma_{i+1}\sigma_i$$
and
\begin{align*} a_{i,j}\sigma_k =&\sigma_{j-1}\dots\sigma_k\sigma_{k-1}\dots\sigma_i\sigma_k=\sigma_{j-1}\dots\sigma_k\sigma_{k-1}\sigma_k\dots\sigma_i\\
=&\sigma_{j-1}\dots\sigma_{k-1}\sigma_k\sigma_{k-1}\dots\sigma_i=\sigma_{k-1}\sigma_{j-1}\dots\sigma_i\\
=&\sigma_{k-1}a_{i,j}.
\end{align*}
This proves $(1^\circ)$ and $(2^\circ)$.
Using the equation $(2^\circ)$, we have $$a_{i,j}a_{k,l}=a_{i,j}\sigma_{l-1}\cdots\sigma_k=\sigma_{l-2}\cdots\sigma_{k-1}a_{i,j}=a_{k-1,l-1}a_{i,j} $$
as desired in $(3^\circ)$. On $(4^\circ)$,  for arbitrary $k<l$, repeating the use of the equation $(3^\circ)$, we have
\begin{align*}
e_{k, n+2-k}e_{l,n+2-l}=&
a_{k,n+2-k}a_{k+1,n+2-k}^{-1}a_{l,n+2-l}a_{l+1,n+2-l}^{-1}\\
=&a_{k,n+2-k}a_{l+1,n+3-l}a_{k+1,n+2-k}^{-1}a_{l+1,n+2-l}^{-1}\\
=&a_{l,n+2-l}a_{k,n+2-k}a_{l+2,n+3-l}^{-1}a_{k+1,n+2-k}^{-1}\\
=&a_{l,n+2-l}a_{l+1,n+2-l}^{-1}a_{k,n+2-k}a_{k+1,n+2-k}^{-1}\\
=&e_{l,n+2-l}e_{k, n+2-k}
\end{align*}
as desired. On $(5^\circ)$,   when $2< k\leq [\frac{n+1}{2}]$, repeating the use of the equation $(3^\circ)$ gives
\begin{align*}
e_{k,n+2-k}a_{1,n}= & a_{k,n+2-k}a_{k+1,n+2-k}^{-1}a_{1,n}\\
= &a_{k,n+2-k}a_{1,n}a_{k+2,n+3-k}^{-1}=a_{1,n}a_{k+1,n+3-k}a_{k+2,n+3-k}^{-1}\\
= &a_{1,n}e_{k+1,n+3-k}.
\end{align*}
\end{proof}

\begin{proposition}
 The elements $s_{i,j}, \overline{ij}\in E(C_n)$, $\psi(a)$ and $\psi(b)$ with three families of relations:
\begin{enumerate}
\item[$R_1:$] $s_{i,j}, \overline{ij}\in E(C_n)$ are pairwise  commutative;
\vskip.2cm
\item[$R_2:$] $\psi(a)^{-1}s_{i,j}\psi(a)=\begin{cases}
s_{a(i),a(j)} & \text{if $1\leq i< n-1$ and $j=i+1$}  \\
s_{a(j), a(i)} & \text{if $i=1$ and $j=n$}\\
s_{a(j), a(i)} & \text{if $i=n-1$ and $j=n$};\\
\end{cases}$ \\
$\psi(b)^{-1}s_{i,j}\psi(b)=\begin{cases}
s_{b(j),b(i)} & \text{if $1< i\leq n-1$ and $j=i+1$}  \\
s_{b(i),b(j)} & \text{if $i=1$ and $j=2$}\\
s_{b(i),b(j)} & \text{if $i=1$ and $j=n$};\end{cases}$
\vskip.2cm
\item[$R_3:$] $\psi(a)^n=s_{1,n}s_{n-1,n}s_{n-2,n-1}\cdots s_{1,2}$;\\
\vskip .1cm
$\psi(b)^2=\begin{cases}
e & \text{ if $n$ is even}\\
s_{\frac{n+1}{2},\frac{n+3}{2}} & \text{ if $n$ is odd};
\end{cases}$\\
\vskip .1cm
$\psi(b)\psi(a)\psi(b)\psi(a)=\begin{cases}
s_{1,2}s_{\frac{n+2}{2},\frac{n+4}{2}} & \text{ if $n$ is even}\\
s_{1,2}s_{\frac{n+1}{2},\frac{n+3}{2}}s_{\frac{n+3}{2},\frac{n+5}{2}} & \text{ if $n$ is odd}
\end{cases}$
\end{enumerate}
give a presentation of  the crossing-changable braid group $B(C_n)$.
\end{proposition}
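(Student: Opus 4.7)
The plan is to apply the group-extension recipe recalled just before the proposition to the sequence $1\to P(C_n)\to B(C_n)\to \mathcal{D}_{2n}\to 1$ from Theorem~\ref{sequence}. The kernel $P(C_n)$ is free abelian on $\{s_{i,j}:\overline{ij}\in E(C_n)\}$ by Corollary~\ref{free abelian} (since for $n\ge 4$ the cycle contains no $3$-circuit), so its defining relations are exhausted by the pairwise commutators of $R_1$. Taking $\mathcal{D}_{2n}=\langle a,b\mid a^n,b^2,(ba)^2\rangle$ and the section $\psi$ just fixed, the recipe reduces the proof to two tasks: establishing the conjugation formulas $\psi(a)^{-1}s_{i,j}\psi(a)$ and $\psi(b)^{-1}s_{i,j}\psi(b)$ on each generator (giving $R_2$), and identifying $\psi(a)^n$, $\psi(b)^2$, $\psi(b)\psi(a)\psi(b)\psi(a)$ as the specified elements of $P(C_n)$ (giving $R_3$).

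For $R_2$, Remark~\ref{i_*} permits each conjugation to be carried out inside $B_n$ before projecting. Writing $s_{i,j}=a_{i+1,j}\sigma_i^2 a_{i+1,j}^{-1}$ and using Lemma~\ref{equations}$(2^\circ)$--$(3^\circ)$ to slide $\psi(a)=a_{1,n}$, which realises the cyclic permutation $(1\,2\,\cdots\,n)$, past the $a$-blocks, the conjugate $\psi(a)^{-1}s_{i,j}\psi(a)$ lands on $s_{a(i),a(j)}$; it must be reordered to $s_{a(j),a(i)}$ precisely in the two boundary cases $(i,j)=(1,n)$ and $(i,j)=(n-1,n)$, where $a(i)>a(j)$. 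The commutativity $(4^\circ)$ of the factors of $\psi(b)$ together with the same sliding lemmas reduces the conjugation by $\psi(b)$ to an analogous index computation.

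For $R_3$, each identity is verified in $B_n$ and then projected, so that every $s_{i,j}$ with $\overline{ij}\notin E(C_n)$ disappears. The element $\psi(a)^n=a_{1,n}^n$ is the classical full twist $\Delta^2$, which factors (in several well-known orders) as a product of each $s_{i,j}$, $1\le i<j\le n$, exactly once; after killing the non-edge factors and reordering via the abelianity of $P(C_n)$, one recovers $s_{1,n}s_{n-1,n}s_{n-2,n-1}\cdots s_{1,2}$. For $\psi(b)^2$, the commutativity $(4^\circ)$ gives $\psi(b)^2=\prod_{k=2}^{r(n)}e_{k,n+2-k}^2=\prod_{k=2}^{r(n)}s_{k,n+2-k}$ in $B_n$, and a case-check on when $\overline{k\,(n+2-k)}\in E(C_n)$ yields the two parities. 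For $\psi(b)\psi(a)\psi(b)\psi(a)$, the plan is to invoke Lemma~\ref{equations}$(5^\circ)$ repeatedly to push the middle copy of $\psi(a)$ leftward through the interior factors of $\psi(b)$ via the index-shift $k\mapsto k+1$, then combine with the $R_2$-formulas already proved and reattach the remaining $\psi(a)$ so that most contributions telescope.

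The main obstacle is the third identity of $R_3$. Lemma~\ref{equations}$(5^\circ)$ applies only for $2<k\le[(n+1)/2]$, so the leftmost factor $e_{2,n}$, and (depending on parity) the middle factor of $\psi(b)$, require separate treatment; these boundary contributions are precisely what produce the right-hand sides $s_{1,2}s_{(n+2)/2,(n+4)/2}$ in the even case and $s_{1,2}s_{(n+1)/2,(n+3)/2}s_{(n+3)/2,(n+5)/2}$ in the odd case. Once this telescoping computation is executed cleanly and verified to land inside $P(C_n)$, the group-extension theorem assembles the generators and relations into the claimed presentation of $B(C_n)$.
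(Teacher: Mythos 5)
Your framework and most of the execution coincide with the paper's proof: the extension-theoretic reduction to verifying $R_1$--$R_3$, obtaining $R_1$ from Corollary~\ref{free abelian}, proving $R_2$ by conjugating inside $B_n$ via Remark~\ref{i_*} and the sliding identities of Lemma~\ref{equations}, expanding $\psi(a)^n=i_*(a_{1,n}^n)$ (your appeal to the full twist $\Delta^2$ is only a cosmetic variant of the paper's direct expansion into bands $s_{i,j}$, each appearing once, with non-edge bands untangled), and computing $\psi(b)^2$ from $(4^\circ)$ exactly as the paper does.

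The genuine gap is at the point you yourself flag as the main obstacle: the identity for $\psi(b)\psi(a)\psi(b)\psi(a)$. Your ``telescoping'' plan is not a proof and, as described, would not go through: applying Lemma~\ref{equations}$(5^\circ)$ pushes $a_{1,n}$ leftward at the cost of shifting every interior factor to $e_{k+1,n+3-k}$, whose index pairs sum to $n+4$; these are \emph{not} factors of $\psi(b)$, so nothing cancels against the second copy of $\psi(b)\psi(a)$, and one is left with the square $i_*(a_{3,n}e_{4,n}\cdots e_{r(n)+1,s(n)+1})^2$, a word that is not a product of disjoint bands and cannot simply be read off. The paper's resolution rests on an idea absent from your proposal: induction on $n$. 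It observes that this residual square only involves the strands along the path $\Gamma':3$---$4$---$\cdots$---$n$ inside $C_n$, identifies $\Gamma'$ with the corresponding path on $[n-2]$, constructs (as in Lemma~\ref{epimorphism}) an epimorphism $j_*:B(C_{n-2})|_{\Aut(\Gamma')}\longrightarrow B(\Gamma')$ under which the residual square is $\bigl(\psi(a)\psi(b)\bigr)^2$ computed in $B(C_{n-2})$, rewrites that as $\psi(b)^{-1}\bigl(\psi(b)\psi(a)\psi(b)\psi(a)\bigr)\psi(b)$, and evaluates it using the inductive hypothesis together with the already-established $R_2$-relations, anchoring everything with the direct computations for $n=4,5$. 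In particular, your claim that the middle generators $s_{\frac{n+2}{2},\frac{n+4}{2}}$ (resp.\ $s_{\frac{n+1}{2},\frac{n+3}{2}}s_{\frac{n+3}{2},\frac{n+5}{2}}$) arise from boundary factors of $\psi(b)$ is unsupported and does not match the actual mechanism: in the paper only $s_{1,2}$ comes from the boundary manipulation $e_{2,n}a_{1,n}=a_{3,n}\sigma_2^2\sigma_1$ (with the non-edge band untangled), while the middle generators are inherited from the smaller cycle under $j_*$ and the $\psi(b)$-conjugation. Until this third relation is actually derived---by the paper's induction or by an explicit computation you have not supplied---the presentation is not established.
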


\begin{proof}
By the theory of group extension, it suffices to show that three families of relations $R_1$--$R_3$ hold.
\vskip.2cm			
The relation $R_1$ directly follows from the commutativity of $P(C_n)$.
\vskip.2cm
The relation $R_2$ comes from the conjugation action of $\mathcal{D}_{2n}$ on $P(C_n)$. We convention that $\overline{ij}\in E(C_n)$
means for $1\leq i\leq n-1, j=i+1$ and for $i=n$, $j=1$.
We proceed as follows. On $\psi(a)^{-1}s_{i,j}\psi(a)$ with  $\overline{ij}\in E(C_n)$,
\begin{align*}
\psi(a)^{-1}s_{i,j}\psi(a)=& \psi(a)^{-1}\sigma_i^2\psi(a)=i_*(a_{1,n}^{-1}\sigma_i^2a_{1,n})\\
=&
\begin{cases}
i_*(a_{1,n}^{-1}a_{1,n}\sigma_{i+1}^2) & \text{for $1\leq i\leq n-2$}\\
i_*(a_{1,n}\sigma_1^2a_{1,n}^{-1}) & \text{for $i=n-1$}\\
i_*(a_{1,n}^{-1}a_{2,n}\sigma_1^2a_{2,n}^{-1}a_{1,n}) & \text{for $i=n$}
\end{cases}
\text{ \ \ \ (by Lemma~\ref{equations}$(1^\circ)-(2^\circ)$)}\\
=& \begin{cases}
s_{i+1,i+2} & \text{for $1\leq i\leq n-2$}\\
s_{1,n} & \text{for $i=n-1$}\\
s_{1,2} & \text{for $i=n$}
\end{cases}
\text{ \ \ \ (by (\ref{s}) and (\ref{a}))}\\
=& s_{a(i),a(j)}.
\end{align*}
On $\psi(b)^{-1}s_{i,j}\psi(b)$  with  $\overline{ij}\in E(C_n)$, we  have that
\begin{align*}
\psi(b)^{-1}s_{i,j}\psi(b)
= &\begin{cases}
 e^{-1}_{\frac{n}{2},\frac{n+4}{2}}\cdots e^{-1}_{2,n}s_{i,j}e_{2,n}\cdots e_{\frac{n}{2},\frac{n+4}{2}} &
\text{ if $n$ is even}\\
e^{-1}_{\frac{n+1}{2},\frac{n+3}{2}}\cdots e^{-1}_{2,n}s_{i,j}e_{2,n}\cdots e_{\frac{n+1}{2},\frac{n+3}{2}} &
\text{ if $n$ is odd}.\\
\end{cases}
\end{align*}
In particular, if $\overline{ij}\not\in E(C_n)$ then $s_{i,j}$ is a trivial braid so $\psi(b)^{-1}s_{i,j}\psi(b)=e$.
First let us look at the conjugation
 $e_{k, n+2-k}^{-1}s_{i,j}e_{k,n+2-k}$  where  $1<k\leq [\frac{n+1}{2}]$ and $\overline{ij}\in E(C_n)$. If each of $i$ and $j$ is not equal to $k$ or $n+2-k$, then we easily see from the
geometric meanings of $e_{k, n+2-k}$ and $s_{i,j}$ that $e_{k, n+2-k}$ and $s_{i,j}$ are commutative so $$e_{k, n+2-k}^{-1}s_{i,j}e_{k,n+2-k}=s_{i,j}.$$
Thus, by Lemma~\ref{equations}$(4^\circ)$, it suffices to consider
the case in which one of $i$ and $j$ is equal to $k$ or $n+2-k$. The argument proceeds as follows:
\vskip .2cm
When $i=1, j=2$,
\begin{align*}
\psi(b)^{-1}s_{1,2}\psi(b)&=i_*(e_{2,n}^{-1}s_{1,2}e_{2,n})=i_*(a_{3,n}a_{2,n}^{-1}\sigma_1^2a_{2,n}a_{3,n}^{-1})=i_*(a_{3,n}\sigma_2^{-1}\sigma_1^2\sigma_2a_{3,n}^{-1})\\
&=i_*(s_{2,n}^{-1}s_{1,n}s_{2,n})=s_{1,n}=s_{b(1),b(2)}.
\end{align*}

When $i=1, j=n$,
\begin{align*}
\psi(b)^{-1}s_{1,n}\psi(b)=i_*(e_{2,n}^{-1}s_{1,n}e_{2,n})=i_*(a_{3,n}a_{2,n}^{-1}a_{2,n}\sigma_1^2a_{2,n}^{-1}a_{2,n}a_{3,n}^{-1})
=i_*(\sigma_1^2)=s_{1,2}=s_{b(1),b(n)}.
\end{align*}

When $i=l-1, j=l$ where $2<l\leq n$, if $2<l\leq [\frac{n+1}{2}]$ then
\begin{align*}
\psi(b)^{-1}s_{l-1,l}\psi(b)=&i_*(e_{l-1,n+3-l}^{-1}e_{l,n+2-l}^{-1}s_{l-1,l}e_{l,n+2-l}e_{l-1,n+3-l})\\
=&i_*(e_{l-1,n+3-l}^{-1}a_{l+1,n+2-l}a_{l,n+2-l}^{-1}s_{l-1,l}a_{l,n+2-l}a_{l+1,n+2-l}^{-1}e_{l-1,n+3-l})\\
=&i_*(e_{l-1,n+3-l}^{-1}a_{l+1,n+2-l}\sigma_l^{-1}s_{l-1,l}\sigma_la_{l+1,n+2-l}^{-1}e_{l-1,n+3-l})\\
=&i_*(e_{l-1,n+3-l}^{-1}s_{l, n+2-l}^{-1}s_{l-1,n+2-l}s_{l, n+2-l}e_{l-1,n+3-l})\\
=&i_*(s_{l, n+2-l}^{-1}e_{l-1,n+3-l}^{-1}s_{l-1,n+2-l}e_{l-1,n+3-l}s_{l, n+2-l})\\
=&i_*(s_{l, n+2-l}^{-1}a_{l-1,n+3-l}^{-1}a_{l-1,n+2-l}a_{l-1,n+1-l}^{-1}\sigma_{n+1-l}^2a_{l-1,n+1-l}a_{l-1,n+2-l}^{-1}\\
&\cdot a_{l-1,n+3-l}s_{l, n+2-l})\\
=&i_*(s_{l, n+2-l}^{-1}a_{l-1,n+3-l}^{-1}\sigma_{n+1-l}^2a_{l-1,n+3-l}s_{l, n+2-l})\\
=&i_*(s_{l, n+2-l}^{-1}a_{l-1,n+1-l}^{-1}\sigma_{n+1-l}^{-1}\sigma_{n+1-l}\sigma_{n+2-l}^2\sigma_{n+1-l}^{-1}\sigma_{n+1-l}a_{l-1,n+1-l}s_{l, n+2-l})\\
=&i_*(s_{l, n+2-l}^{-1}s_{n+2-l, n+3-l}s_{l, n+2-l})=s_{n+2-l, n+3-l}=s_{b(l-1),b(l)};
\end{align*}
if $[\frac{n+1}{2}]< l \leq n$,  in the similar way as above, we can still obtain $\psi(b)^{-1}s_{l-1,l}\psi(b)=s_{b(l-1),b(l)}$.

\vskip.2cm
In conclusion, $\psi(b)^{-1}s_{i,j}\psi(b)=\begin{cases}
s_{b(j),b(i)} & \text{if $1< i\leq n-1$ and $j=i+1$}  \\
s_{b(i),b(j)} & \text{if $i=1$ and $j=2$}\\
s_{b(i),b(j)} & \text{if $i=1$ and $j=n$}\end{cases}$ as desired.

\vskip.2cm
Finally let us consider the relation $R_3$, which essentially comes from the relations in the dihedral group $\mathcal{D}_{2n}$.
Making use of  Reamrk~\ref{i_*}, Lemma~\ref{equations} and (\ref{s})--(\ref{a}), by direct calculations we have
\begin{align*}
\psi(a)^n= & i_*(a_{1,n}^n)=i_*((\sigma_{n-1}\sigma_{n-2}\cdots\sigma_1^2\cdots\sigma_{n-2}\sigma_{n-1})(\sigma_{n-2}\cdots
\sigma_1^2\cdots\sigma_{n-2})\cdots \sigma_1^2)\\
= & i_*((s_{1,n}s_{2,n}\cdots s_{n-1,n})(s_{1,n-1}s_{2,n-1}\cdots s_{n-2,n-1})\cdots s_{1,2})\\
=& s_{1,n}s_{n-1,n}s_{n-2,n-1}\cdots s_{1,2}
\end{align*}
and
\begin{align*}
\psi(b)^2= &
 e_{2,n}^2e_{3,n-1}^2\cdots e^2_{r(n), s(n)}
  \text{\ \ \ (by  Lemma~\ref{equations}$(4^\circ)$)}\\
= &
 s_{2,n}s_{3,n-1}\cdots s_{r(n), s(n)}
  \text{\ \ \ (by (~\ref{s}))}\\
= &
\begin{cases}
 e & \text{ if $n$ is even}\\
  s_{\frac{n+1}{2}\frac{n+3}{2}} & \text{ if $n$ is odd}
  \end{cases}
  \text{\ \ \ (since  $\overline{ij}\not\in E(C_n)$ with $|i-j|>1$ except $\overline{1n}\in E(C_n)$).}\\
\end{align*}
On $\psi(b)\psi(a)\psi(b)\psi(a)$, by direct calculations we have that when $n=4$
\begin{align*}
\psi(b)\psi(a)\psi(b)\psi(a)= &i_*((a_{2,4}a_{3,4}^{-1}a_{1,4})^2)=(a_{2,4}a_{1,3})^2=(\sigma_3\sigma_2\sigma_2\sigma_1)^2=\sigma_3^2\sigma_1^2\\
= & s_{1,2}s_{3,4} \text{ (since $\overline{24}\notin E(C_4)$)}
\end{align*}
and when $n=5$,
\begin{align*} \psi(b)\psi(a)\psi(b)\psi(a)= & i_*((a_{2,5}a_{3,5}^{-1}a_{3,4}a_{1,5})^2)=(a_{2,5}a_{3,5}^{-1}a_{1,5}a_{4,5})^2\\
=&(a_{2,5}a_{1,3}a_{4,5})^2=(\sigma_4\sigma_3\sigma_1\sigma_4)^2=\sigma_4^2\sigma_3^2\sigma_1^2\\
= & s_{1,2}s_{3,4}s_{4,5}
 \text{ ( since $\overline{35}\notin E(C_5)$)}.
\end{align*}
More generally, when $n\geq 6$, 
by Lemma~\ref{equations} we have that
\begin{align*}
\psi(b)\psi(a)\psi(b)\psi(a)= & i_*(e_{2,n}e_{3,n-1}\cdots e_{r(n), s(n)}a_{1,n})^2=i_*(e_{2,n}a_{1,n}e_{4,n}\cdots e_{r(n)+1, s(n)+1})^2\\
= & i_*(a_{2,n}a_{3,n}^{-1}a_{1,n}e_{4,n}\cdots e_{r(n)+1, s(n)+1})^2=i_*(a_{3,n}\sigma_2^2\sigma_1e_{4,n}\cdots e_{r(n)+1, s(n)+1})^2\\
= &i_*(a_{3,n}e_{4,n}\cdots e_{r(n)+1, s(n)+1})^2\sigma_1^2 =i_*(a_{3,n}e_{4,n}\cdots e_{r(n)+1, s(n)+1})^2s_{1,2}.
\end{align*}
 We see that $i_*(a_{3,n}e_{4,n}\cdots e_{r(n)+1, s(n)+1})^2$ is only revelent to the path
  $$\Gamma': 3\text{ ---} 4\text{ ---}\cdots\text{ ---}n$$ in $C_n$,
 which  can  be regarded as the path
 $1$---$2$---$\cdots$---$(n-2)$ of $C_{n-2}$ on vertex set $[n-2]=\{1, 2,..., n-2\}$. Furthermore, in a similar way to the proof of Lemma~\ref{epimorphism}, we may obtain that the inclusion $j: F(\mathbb{C}, C_{n-2})\longrightarrow F(\mathbb{C},\Gamma')$ induces an epimorhpism $j_*: B(C_{n-2})|_{\Aut(\Gamma')}\longrightarrow B(\Gamma')$. Thus  $$i_*(a_{3,n}e_{4,n}\cdots e_{r(n)+1, s(n)+1})^2$$ becomes $$j_*(i_*(a_{1,n-2}e_{2,n-2}\cdots e_{r(n-2)+1, s(n-2)+1} )^2),$$ in which
$i_*(a_{1,n-2}e_{2,n-2}\cdots e_{r(n-2)+1, s(n-2)+1} )^2$  is exactly $(\psi(a)\psi(b))^2$ in $B(C_{n-2})$. This means that we can get the expression of $\psi(b)\psi(a)\psi(b)\psi(a)$ by induction.
\vskip .2cm

We have proved the cases of $n=4, 5$. Now assume inductively that $$\psi(b)\psi(a)\psi(b)\psi(a)=
\begin{cases}
s_{1,2}s_{k+1,k+2} & \text{if $n=2k$}\\
s_{1,2}s_{k+1,k+2}s_{k+2, k+3} & \text{if $n=2k+1$}.
\end{cases}
$$
Then, as  discussed above,
  \begin{align*}
  i_*(a_{1,n}e_{2,n}\cdots e_{r(n)+1, s(n)+1})^2=
  &(\psi(a)\psi(b))^2
=\psi(b)^{-1}\psi(b)\psi(a)\psi(b)\psi(a)\psi(b)\\
=&
\begin{cases}
\psi(b)^{-1}s_{1,2}s_{k+1, k+2}\psi(b) & \text{if $n=2k$}\\
\psi(b)^{-1}s_{1,2}s_{k+1, k+2}s_{k+2,k+3}\psi(b) & \text{if $n=2k+1$}
\end{cases}\\
=&\begin{cases}
 s_{1,2k}s_{k,k+1} & \text{if $n=2k$}\\
 s_{1,2k+1}s_{k,k+1}s_{k+1,k+2} & \text{if $n=2k+1$.}\\
\end{cases}
\end{align*}
 When $n=2k+2$,
$$\psi(b)\psi(a)\psi(b)\psi(a)=j_*(s_{1,2k}s_{k,k+1})\sigma_1^2=i_*(s_{3,2k+2}s_{k+2,k+3})\sigma_1^2=s_{1,2}s_{k+2,k+3}$$
in $B(C_{2k+2})$ and when $n=2k+3$,
\begin{align*}
\psi(b)\psi(a)\psi(b)\psi(a)= &j_*(s_{1,2k+1}s_{k,k+1}s_{k+1, k+2})\sigma_1^2=i_*(s_{3, 2k+3}s_{k+2,k+3}s_{k+3,k+4})\sigma_1^2\\
=&s_{1,2}s_{k+2, k+3}s_{k+3,k+4}
\end{align*}
in $B(C_{2k+3})$, as desired. This completes the proof.
\end{proof}






\end{document}